\newcommand{\Bs}{{\rm Bs}}
\newtheorem{lemma1}{}[section]
\newenvironment{lemma}{\begin{lemma1}{\bf Lemma.}}{\end{lemma1}}
\newenvironment{example}{\begin{lemma1}{\bf Example.}\rm}{\end{lemma1}}
\newenvironment{theorem}{\begin{lemma1}{\bf Theorem.}}{\end{lemma1}}
\newenvironment{proposition}{\begin{lemma1}{\bf Proposition.}}{\end{lemma1}}
\newenvironment{corollary}{\begin{lemma1}{\bf Corollary.}}{\end{lemma1}}
\newenvironment{remarks}{\begin{lemma1}{\bf Remarks.}\rm}{\end{lemma1}}
\newenvironment{definition}{\begin{lemma1}{\bf Definition.}}{\end{lemma1}}
\newenvironment{remark*}{{\bf Remark.}}{}
\newenvironment{example*}{{\bf Example.}}{}
\newcommand{\R}{\ensuremath{\mathbb{R}}}
\newcommand{\Q}{\ensuremath{\mathbb{Q}}}
\newcommand{\C}{\ensuremath{\mathbb{C}}}
\newcommand{\N}{\ensuremath{\mathbb{N}}}
\newcommand{\PP}{\ensuremath{\mathbb{P}}}
\newcommand{\holom}[3]{\ensuremath{#1\colon #2  \rightarrow #3}}
\newcommand{\fibre}[2]{\ensuremath{#1^{-1} (#2)}}
\newcommand\sI{{\mathcal I}}
\newcommand\sO{{\mathcal O}}
\DeclareMathOperator*{\Nklt}{Nklt}
\newcommand\pic{\ensuremath{\mbox{Pic}}}
\newcommand{\Chow}[1]{\ensuremath{\mathcal{C}(#1)}}
\newcommand{\Hilb}{\ensuremath{\mathcal{H}}}
\definecolor{red-}{rgb}{1.0,0.0,0.0}
\definecolor{grey}{rgb}{0.6, 0.6, 0.6}
\definecolor{brown}{rgb}{0.5,0.2,0.0}
\definecolor{blue-}{rgb}{0.0,0.1,1.0}
\definecolor{green-}{rgb}{0.0, 0.6, 0.0}
\definecolor{gold}{rgb}{0.8,0.7,0.0}
\definecolor{black}{rgb}{0.0,0.0,0.0}
\definecolor{DarkGreen}{rgb}{0.0,0.3,0.2}
\definecolor{LightGreen}{rgb}{0.8,1.0, 0.8}
\definecolor{yellow}{rgb}{0.9,0.9,0.0}
\title{Fano varieties with small non-klt locus} 
\date{September 20, 2013}
\author{Mauro C. Beltrametti}
\author{Andreas H\"oring}
\author{Carla Novelli}
\subjclass[2000]{14J45, 14E30, 14D06, 14J40, 14M22}
\keywords{Fano variety, non-klt locus}
\address{Mauro C. Beltrametti, Dipartimento di Matematica, Universit\`a degli Studi di Genova, Via Dodecaneso 35, I-16146 Genova, Italy}
\email{beltrame@dima.unige.it}
\address{Andreas H\"oring, Laboratoire de Math{\'e}matiques J.A. Dieudonn{\'e},
UMR 7351 CNRS, Universit{\'e} de Nice Sophia-Antipolis, 06108 Nice Cedex 02, France        
}
\email{hoering@unice.fr}
\address{Carla Novelli, Dipartimento di Matematica, Universit\`a degli Studi di Padova, via Trie\-ste 63, I-35121 Padova, Italy}
\email{novelli@math.unipd.it}
\begin{document}

\begin{abstract}
Let $X$ be a Fano variety of index $k$.  
Suppose that the non-klt locus $\Nklt(X)$ is not empty. We prove that
$\dim \Nklt(X) \geq k-1$ and equality holds if and only if 
$\Nklt(X)$ is a linear $\PP^{k-1}$.
In this case $X$ has  lc singularities and is a generalised cone
with $\Nklt(X)$ as vertex.

If $X$ has lc singularities and $\dim \Nklt(X)=k$ we describe the non-klt locus $\Nklt(X)$ and the global geometry of $X$. 
Moreover, we construct examples to show that all the classification results are effective.
\end{abstract}

\maketitle

\section{Introduction}

Let $X$ be a Fano variety, i.e. $X$ is a normal projective complex variety such that the anticanonical 
divisor $-K_X$ is Cartier and ample. 
While Fano varieties with mild singularities (terminal or canonical)
have been studied by many authors, the goal of this paper is to study Fano varieties whose
non-klt locus $\Nklt(X)$ is not empty. 
 A cone over a smooth variety $Y$ with trivial anticanonical
divisor provides a simple example of such a variety. 
Using methods from the minimal model program, Ishii \cite{Ish91, Ish94} characterised
cones as the only Fano varieties having a {\em finite} non-klt locus:

\begin{theorem} \cite{Ish91} \label{theoremishii}
Let $X$ be a Fano variety of dimension $n$ such that $\Nklt(X)$ is not empty and finite. 
Then $X$ is a cone over a variety $Y$ of dimension $n-1$ such that $Y$ has canonical singularities and $K_Y$ is trivial.
\end{theorem}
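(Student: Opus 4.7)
The plan is to reduce to the case $\Nklt(X)=\{p\}$, and then realise $X$ as a cone with vertex $p$ via a linear system produced by a vanishing theorem.

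First, I would isolate a single non-klt point. Fix $p\in\Nklt(X)$. Since $-K_X$ is ample and $\Nklt(X)$ is finite, Shokurov's tie-breaking trick produces an effective $\bQ$-divisor $B\sim_\bQ -\lambda K_X$ with $\lambda<1$ for which $(X,B)$ is log canonical and $\{p\}$ is its unique minimal non-klt centre. Because $-K_X-(K_X+B)\sim_\bQ -(2-\lambda)K_X$ is ample, Nadel vanishing for the multiplier ideal $\sJ(X,B)$ gives
\[
H^0(X,\sO_X(-mK_X))\twoheadrightarrow H^0\bigl(\Nklt(X,B),\sO_{\Nklt(X,B)}(-mK_X)\bigr)
\]
for every $m\geq 1$. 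Iterating tie-breaking with higher vanishing orders at $p$ yields surjections onto all jets of $-mK_X$ at $p$, separating $p$ from every other non-klt point and pinning down its local structure.

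Next, I would build the cone morphism. Pass to a log resolution $\sigma\colon\widetilde{X}\to X$ of $(X,B)$ and let $E$ be the unique prime divisor on $\widetilde{X}$ of discrepancy $-1$ lying over $p$. Subadjunction on $E$, combined with the section lifting above, should imply that the linear system $|\sigma^*(-mK_X)-aE|$ is base point free on $\widetilde{X}$ for suitable integers $a,m$, and that the induced morphism $\Phi\colon\widetilde{X}\to\PP^N$ sends $E$ isomorphically onto a variety $Y$ of dimension $n-1$. Projection-formula arguments then identify $\widetilde{X}$ with a $\PP^1$-bundle over $Y$ having $E$ as one of the two canonical sections; contracting $E$ exhibits $X$ as the cone over $(Y,L)$ with vertex $p$, and in particular forces $\Nklt(X)=\{p\}$.

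Finally, adjunction on $E$ yields $K_E\sim 0$, hence $K_Y\sim 0$, and the canonicity of $Y$ is inherited from the klt property of $X\setminus\{p\}$ via the $\C^*$-bundle structure, completing the theorem. The hardest step is the middle one: converting the algebraic data (sections of $-mK_X$ with prescribed behaviour at $p$) into the geometric statement that the induced morphism on $\widetilde{X}$ has fibres which are proper transforms of lines through $p$. Concretely, one must rule out unexpected base locus of $|\sigma^*(-mK_X)-aE|$ off $E$ and verify that the general fibre of $\Phi$ meets $E$ transversally in a single point; this is where the finiteness of $\Nklt(X)$ should force rigidity.
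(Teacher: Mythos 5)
The paper does not give its own proof of this statement; it cites \cite{Ish91} and then reproves and generalises it as Theorem~\ref{theorembasicestimate}, whose argument is entirely different from yours: pass to the canonical modification $\mu\colon X'\to X$, use Lemma~\ref{lemmacontraction} to find a Mori contraction $\varphi\colon X'\to Y$ on which the exceptional divisor $E$ is positive, show via Andreatta's fibre-dimension bounds (\cite{And95}, refined by Lemma~\ref{lemmarounddown}) that $\varphi$ cannot be birational and is therefore a $\PP^k$-bundle ($\PP^1$-bundle in the present case), and finally read off the cone structure from the pair $(\mu,\varphi)$, with lc-ness of $X$ and $K_Y\sim 0$ extracted at the end by adjunction along $E_1$. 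Your route through Nadel vanishing and anticanonical linear systems is genuinely different and closer in spirit to Ishii's original paper, so comparison is worthwhile.

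There are, however, two real gaps in your proposal. First, the tie-breaking step is circular: to produce an \emph{effective} $B\ge 0$ with $(X,B)$ log canonical and $p$ an lc centre you need $(X,0)$ itself to be log canonical at $p$, since adding an effective boundary only lowers discrepancies; but lc-ness of $X$ is part of the \emph{conclusion} of the theorem, not a hypothesis. One can run Nadel vanishing with the multiplier ideal $\sJ(X,0)$ without any lc assumption, but then the subadjunction on a discrepancy $-1$ divisor over $p$ — and indeed the existence of such a divisor with $K_E\sim 0$ — needs to be rederived, not assumed. Second, and more seriously, the cone morphism is asserted rather than constructed: the claims that $|\sigma^*(-mK_X)-aE|$ ``should'' be free, that $\Phi$ maps $E$ isomorphically onto an $(n-1)$-fold $Y$, and that finiteness of $\Nklt(X)$ ``should force rigidity'' of the $\PP^1$-fibres are precisely the content of the theorem, and you yourself flag this as the unfinished step. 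The paper sidesteps this difficulty entirely by outsourcing the classification of the fibre-type contraction to \cite{And95} and \cite[Sect.~7.2--7.3]{BS95}; without an analogous input your sketch does not close.
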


Recall now that the index of a Fano variety $X$ is defined as
$$
\sup \{ 
m \in \N \ | \ \exists \ H \in \pic(X) \ \mbox{s.t.} -K_X \simeq m H
\}.
$$
Making stronger assumptions on the singularities of $X$, the first-named author and Sommese 
established a basic relation between the index and the dimension of the non-klt locus:

\begin{theorem} \cite{BS87} \label{theoremBS}
Let $X$ be a Fano variety of index $k$ that is Cohen--Macaulay and
such that $-K_X \simeq k H$, with $H$ a very ample Cartier divisor on~$X$. 
Suppose that the non-klt locus  $\Nklt(X)$ is not empty. Then we have
$$
\dim \Nklt(X) \geq k-1,
$$
and equality holds if and only if $\Nklt(X)$ is a linear $\PP^{k-1}$ and $X$ is a generalised cone with $\Nklt(X)$ as  vertex.
\end{theorem}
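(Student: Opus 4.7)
The plan is to induct on the index $k$, using general members of the very ample linear system $|H|$ to cut $X$ down to the setting of Theorem \ref{theoremishii}.

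For general $H_1 \in |H|$ set $X_1 := X \cap H_1$. Since $H$ is very ample and $X$ is normal and Cohen--Macaulay, Bertini (together with $H_1$ being a regular element on $X$) yields that $X_1$ is normal and Cohen--Macaulay. Adjunction gives $-K_{X_1} \sim (k-1)\,H|_{X_1}$ with $H|_{X_1}$ very ample, so $X_1$ satisfies the hypotheses of the theorem with index $k-1$. The key technical input is a Bertini principle for non-klt loci: pulling a log resolution of $X$ back along a general $H_1 \in |H|$ produces a log resolution of $X_1$ along which adjunction preserves discrepancies, yielding the set-theoretic equality
$$
\Nklt(X_1) = \Nklt(X) \cap H_1.
$$
Consequently, if $\dim \Nklt(X) \geq 1$ then $\Nklt(X_1) \neq \emptyset$ with $\dim \Nklt(X_1) = \dim \Nklt(X) - 1$.

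The inequality $\dim \Nklt(X) \geq k-1$ then follows by induction on $k$: the case $k=1$ is immediate from $\Nklt(X) \neq \emptyset$. For $k \geq 2$, if $\dim \Nklt(X) \geq 1$, applying the inductive hypothesis to $X_1$ gives $\dim \Nklt(X_1) \geq k-2$, whence $\dim \Nklt(X) \geq k-1$; the remaining possibility $\dim \Nklt(X) = 0$ is ruled out by Theorem \ref{theoremishii}, which would force $X$ to be a cone over a canonical Calabi--Yau $Y$, while the adjunction computation on the blow-up of the vertex shows that such a cone has index $1$, contradicting $k \geq 2$.

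For the equality case $\dim \Nklt(X) = k-1$, iterating the hyperplane-section construction yields $X_{k-1}$, a Cohen--Macaulay Fano of index $1$ whose non-klt locus is a single point; Theorem \ref{theoremishii} then identifies $X_{k-1}$ as a cone over a canonical Calabi--Yau, with $\Nklt(X_{k-1})$ as vertex. Back in the embedding $X \hookrightarrow \PP^N$ given by $|H|$, the iterated intersection $\Nklt(X) \cap H_1 \cap \cdots \cap H_{k-1}$ is a single reduced point, so the $(k-1)$-dimensional subscheme $\Nklt(X) \subset \PP^N$ has degree $1$ and is therefore a linear $\PP^{k-1}$. To conclude that $X$ itself is a generalised cone with vertex $\Nklt(X)$, one studies the linear projection $X \dashrightarrow \PP^{N-k}$ from $\Nklt(X)$ and shows that its fibres are lines spanning $X$: these are the cone rulings of $X_{k-1}$ propagated as the cutting hyperplanes $H_i$ vary generically in $|H|$.

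The main obstacle is this last step: lifting the cone structure on the linear section $X_{k-1}$ to the ambient variety $X$. One has to verify that the rulings on $X_{k-1}$ extend, via monodromy under deformation of the cutting hyperplanes in their linear systems, to a family of lines sweeping out $X$ and meeting every point of $\Nklt(X) = \PP^{k-1}$. The linearity and degree-one property of $\Nklt(X)$, the Cohen--Macaulayness of every intermediate section, and the base-case cone structure on $X_{k-1}$ together supply the rigidity needed to carry this through.
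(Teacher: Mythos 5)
Your hyperplane-slicing strategy is genuinely different from the route this paper takes for its generalisation (Theorem~\ref{theorembasicestimate}), which passes to the canonical modification, extracts a $K$-negative extremal ray positive on the exceptional divisor via Lemma~\ref{lemmacontraction}, and reads off both the dimension bound and the $\PP^k$-bundle/cone structure from Andreatta's fibre-dimension estimates together with Lemma~\ref{lemmarounddown}. Reducing to Ishii's theorem by cutting with general members of $|H|$ is closer in spirit to the original argument of \cite{BS87}. The first two-thirds of the proposal are essentially sound: for general $H_1\in|H|$ the section $X_1$ is normal and Cohen--Macaulay, $K_{X_1}=(K_X+H_1)|_{X_1}$ is Cartier, a log resolution of $X$ restricts to one of $X_1$ giving $\Nklt(X_1)=\Nklt(X)\cap H_1$ set-theoretically, and the degree-one argument identifying $\Nklt(X)$ with a linear $\PP^{k-1}$ then works. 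The ruling-out of $\dim\Nklt(X)=0$ for $k\geq 2$ via the index computation on the blow-up of the vertex of a cone over a $K$-trivial base also works, though it deserves a line of justification (one has to check that $\mu^*(-K_X)$ is a primitive class on the blow-up).

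The genuine gap is the final step, which you yourself flag as the main obstacle but never actually carry out. Saying the cone rulings on $X_{k-1}$ ``propagate via monodromy'' to a family of lines sweeping out $X$ is a description of what one would like to be true, not an argument, and the intended target is in any case mis-stated: a generalised cone with vertex $\PP^{k-1}$ in the sense of \cite{BS95} is the image of a $\PP^k$-bundle $\PP_Z(\sO_Z^{\oplus k}\oplus L)$ under the birational map contracting the sub-bundle $\PP_Z(\sO_Z^{\oplus k})$ onto the vertex, so the rulings are linear $\PP^k$'s containing $\Nklt(X)$, not lines. Nothing in the sketch produces the base $Z$, shows it is independent of the chosen $H_1,\ldots,H_{k-1}$, constructs the bundle, or verifies that the contracted sub-bundle is exactly the preimage of $\Nklt(X)$. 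Linearity and degree one of $\Nklt(X)$, Cohen--Macaulayness of the sections, and the cone structure of $X_{k-1}$ do not by themselves supply the needed rigidity; one has to carry out something concrete --- the paper's Mori-contraction argument, the adjunction-theoretic nef-value/ladder argument of \cite{BS87}, or a genuine analysis of the linear projection from $\Nklt(X)$ proving that its closure has equidimensional $\PP^k$ fibres. As it stands you have proved the dimension bound and the local description of $\Nklt(X)$, but the global structural conclusion is unproved.
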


Unfortunately the Cohen--Macaulay condition excludes an important number of examples: cones
over Calabi--Yau manifolds are Cohen--Macaulay, but a cone over an abelian variety is not.
Our first result generalises Theorem \ref{theoremishii}
and Theorem \ref{theoremBS}:

\begin{theorem} \label{theorembasicestimate}
Let $X$ be a Fano variety of index $k$ and such that $-K_X \simeq k H$, with $H$  a Cartier divisor on~$X$.
Suppose that the non-klt locus $\Nklt(X)$ is not empty. Then we have
$$\dim \Nklt(X) \geq k-1,$$ and equality holds if and only if 
$
(\Nklt(X),\sO_{\Nklt(X)}(H)) \cong (\PP^{k-1},\sO_{\PP^{k-1}}(1))
$.
In this case $X$ has  lc singularities and is a generalised cone
with $\Nklt(X)$ as  vertex.
\end{theorem}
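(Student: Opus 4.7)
The strategy is to combine Nadel vanishing on the multiplier ideal sheaf $\sJ := \sJ(X,0)$ (whose cosupport is $Z := \Nklt(X)$) with a Hilbert polynomial analysis. Set $\sF := \sO_X / \sJ$. Tensoring the short exact sequence $0 \to \sJ \to \sO_X \to \sF \to 0$ with $\sO_X(mH)$ and applying Nadel vanishing---which requires only that $X$ be $\Q$-Gorenstein, not Cohen--Macaulay---yields $H^i(X, \sJ(mH)) = 0$ for $i \geq 1$ and $m \geq 1-k$, because $mH - K_X = (m+k)H$ is ample in this range. Consequently $H^0(X, \sO_X(mH)) \twoheadrightarrow H^0(Z, \sF(mH))$ is surjective and $H^i(Z, \sF(mH)) \cong H^i(X, \sO_X(mH))$ for $i \geq 1$. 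Since $X$ is irreducible and $H$ ample, $H^0(X, \sO_X(mH)) = 0$ for $m \leq -1$, so $H^0(Z, \sF(mH)) = 0$ at the $k-1$ consecutive values $m \in \{1-k, \dots, -1\}$.

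The Hilbert polynomial $P_\sJ(t) := \chi(X, \sJ(tH))$ has degree $n := \dim X$, and by Nadel equals $h^0(X, \sJ(mH)) = 0$ for $m \in \{1-k, \dots, -1\}$. The cohomology isomorphisms at $m = 0$ further give $\chi(\sF) = \chi(\sO_X)$, hence $P_\sJ(0) = 0$. Thus the degree-$k$ polynomial $r(t) := t(t+1)\cdots(t+k-1)$ divides $P_\sJ$, and writing $P_\sJ = r\,Q$, the identity $P_\sF = P_\sO - r\,Q$ (with $P_\sO(t) := \chi(X, \sO_X(tH))$) exhibits $P_\sF$, of degree $d := \dim Z$, as the remainder of $P_\sO$ modulo $r$ provided $d < k$. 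The $t^{k-1}$-coefficient of this remainder, computed by Lagrange interpolation, is
\[
\frac{1}{(k-1)!} \sum_{i=0}^{k-1} (-1)^i \binom{k-1}{i} \chi\bigl(X, \sO_X(-iH)\bigr),
\]
which via the Koszul complex on $k-1$ general sections of $H$ equals $\chi(\sO_V)/(k-1)!$ for $V$ a codimension-$(k-1)$ complete intersection with $-K_V = H|_V$. Since $V$ is then a Fano variety of index one and $\chi(\sO_V) > 0$, the remainder has degree exactly $k-1$, forcing $\dim Z \geq k-1$.

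In the equality case $\dim Z = k-1$, $P_\sF$ coincides with the remainder and takes the form $c\,\binom{t+k-1}{k-1}$ for $c := \chi(\sO_X)$. Comparing leading coefficients identifies $c$ with the $H$-degree of the scheme-theoretic $Z$, and consistency with the surjection $H^0(X, \sO_X(H)) \twoheadrightarrow H^0(Z, \sF(H))$ forces $c = 1$; hence $P_\sF(t) = \binom{t+k-1}{k-1}$, the Hilbert polynomial of $(\PP^{k-1}, \sO(1))$. Since a reduced $(k-1)$-dimensional polarised scheme of $H$-degree one is linearly embedded, $(\Nklt(X), \sO_{\Nklt(X)}(H)) \cong (\PP^{k-1}, \sO(1))$. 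A further Hilbert-polynomial comparison then forces $\sF = \sO_{Z_{\mathrm{red}}}$ (any nilpotent kernel would have zero Hilbert polynomial and hence vanish), so $\sJ$ equals the reduced ideal of $Z$; equivalently, every log discrepancy over $Z$ is $-1$, and $X$ is log canonical along $Z$. Finally, the exact sequence
\[
0 \to H^0(X, \sJ(H)) \to H^0(X, \sO_X(H)) \to H^0(\PP^{k-1}, \sO(1)) \to 0
\]
identifies a codimension-$k$ subspace $K := H^0(X, \sJ(H))$; the rational map $\phi_K\colon X \dashrightarrow \PP(K^\vee)$ defined by $K$ has generic fibres the linear $\PP^k$'s containing $Z$, exhibiting $X$ as a generalised cone with vertex $Z$.

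The principal obstacle is the positivity input $\chi(\sO_V) > 0$ in the dimension bound: when $|H|$ does not contain $k-1$ sections forming a regular sequence on $X$, the Koszul argument must be adapted via a log resolution $\pi\colon Y \to X$ and the corresponding Koszul complex on $Y$ built from sections of $\pi^* H$ (possibly modified by small exceptional divisors), with positivity transferred back to the required Hilbert-polynomial coefficient. Verifying the cone structure also requires care to ensure $\phi_K$ has the asserted linear fibres.
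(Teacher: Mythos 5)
Your proposal takes a genuinely different route from the paper. The paper passes to the canonical modification $\mu\colon X'\to X$, produces (via the cone theorem) a $K_{X'}$-negative extremal ray $R$ with $E\cdot R>0$ and $\mu^*H\cdot R>0$, and then uses Andreatta's bounds on fibre dimensions of Mori contractions to deduce $\dim\Nklt(X)\ge k-1$; the equality case is then handled by showing the contraction must be of fibre type and is a $\PP^k$-bundle. Your strategy instead stays on $X$, uses Nadel vanishing for the multiplier ideal $\sJ(X,0)$ to force $k$ consecutive integer roots of $P_\sJ(t)=\chi(\sJ(tH))$, and then tries to read off $\dim\Nklt(X)$ from the remainder of $P_{\sO_X}$ modulo $t(t+1)\cdots(t+k-1)$. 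This is an attractive and more "elementary" line of reasoning, closer in spirit to Beltrametti--Sommese than to the paper; the Nadel/Leray bookkeeping, the factorisation $P_\sJ=rQ$, and the Lagrange-interpolation computation of the $t^{k-1}$ coefficient are all correct. However, there are two genuine gaps which the paper's Mori-theoretic argument is designed precisely to avoid.

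\textbf{The positivity input is not established.} The inequality $\dim\Nklt(X)\ge k-1$ reduces, in your setup, to the claim that
\[
\sum_{i=0}^{k-1}(-1)^i\binom{k-1}{i}\chi\bigl(X,\sO_X(-iH)\bigr)>0.
\]
Your Koszul interpretation of this number as $\chi(\sO_V)$ needs $k-1$ sections of $|H|$ forming a regular sequence, and then needs $\chi(\sO_V)>0$. Neither is automatic: nothing in the hypotheses guarantees $h^0(X,\sO_X(H))\ge k-1$, let alone that general sections form a regular sequence; and even if the complete intersection $V$ exists, it inherits irrational singularities from $X$ whenever it meets $\Nklt(X)$ (which it must if, contrary to what you want to prove, $\dim\Nklt(X)\ge k-1$), so there is no a priori vanishing $h^i(V,\sO_V)=0$ for $i>0$ and no reason $\chi(\sO_V)>0$. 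The proposed workaround---doing the Koszul computation on a log resolution $\pi\colon Y\to X$---does not transfer back, because $\chi(X,\sO_X(-iH))\ne\chi(Y,\sO_Y(-i\pi^*H))$ when $R^j\pi_*\sO_Y\ne0$, and the whole point of the non-klt hypothesis is that these higher direct images are non-trivial. This is not a routine technicality: for $X$ Cohen--Macaulay the Beltrametti--Sommese sectional-genus argument does give positivity, but the paper deliberately abandons that approach because it fails without Cohen--Macaulayness.

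\textbf{The equality case is not pinned down.} When $\dim\Nklt(X)=k-1$ you assert $P_\sF(t)=c\binom{t+k-1}{k-1}$ with $c=\chi(\sO_X)$. But $P_\sF$ is the degree-$(k-1)$ interpolating polynomial with $P_\sF(-i)=\chi(X,\sO_X(-iH))$ for $i=0,\dots,k-1$, and it has the binomial form you claim only if $\chi(X,\sO_X(-iH))=0$ for $1\le i\le k-1$. This is a Kodaira/Kawamata--Viehweg-type vanishing that need not hold for a Fano variety with non-klt (hence irrational) singularities, and indeed the failure of such vanishing is the heart of the matter. Even granting the binomial form, the deduction $c=1$ from the surjection $H^0(X,\sO_X(H))\twoheadrightarrow H^0(\Nklt(X),\sF(H))$ is not an argument (a surjection onto a $c\cdot k$-dimensional space does not force $c=1$), and the passage from the numerical identity $P_\sF=\binom{t+k-1}{k-1}$ to the scheme-theoretic conclusion $\sF=\sO_{\PP^{k-1}}$, to log-canonicity of $X$ along $\Nklt(X)$, and to the cone structure via $\phi_K$, are each asserted rather than proved. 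By contrast the paper obtains the $\PP^{k-1}$ and the cone structure directly from the classification of the Mori contraction as a $\PP^k$-bundle, where the identification of the divisor $E_1$ with a relative hyperplane and of the remaining $E_i$ as zero does the work that your Hilbert-polynomial comparison cannot.

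In short: the reduction to a positivity statement about a $(k-1)$-th finite difference of $\chi(\sO_X(mH))$ is a nice idea, but that positivity is exactly the obstacle the paper circumvents by working on the canonical modification, and your proposal does not close that gap.
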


A remarkable feature that is common to the Theorems \ref{theoremishii}, \ref{theoremBS} and \ref{theorembasicestimate}
is that the property $\dim \Nklt(X) = k-1$ implies that the singularities of $X$ are  log-canonical.
If $\dim \Nklt(X) = k$ this is no longer the case (cf. Example \ref{examplenonlc}),
so for the rest of the paper we will make the additional assumption that $X$ is a Fano variety
with  lc singularities. 
Since we assume $K_X$ to be Cartier the non-klt locus
coincides with the locus of irrational singularities \cite[Cor.5.24]{KM98}, so we can study $\Nklt(X)$  both in terms of
discrepancies and using cohomological methods.
Note also that Theorem \ref{theorembasicestimate} naturally separates into 
a local part, i.e. the description of the non-klt locus $\Nklt(X)$, and a global part, i.e. the description
of the geometry of $X$.  
For the local part we use a subadjunction argument to describe the low-dimensional
lc centres:

\begin{theorem} \label{theoremnkltlocus}
Let $X$ be a Fano variety of index $k$ with  lc singularities and such that  $-K_X \simeq k H$,  with $H$   a Cartier divisor on~$X$.
If $\dim \Nklt(X) = k$ we have 
$$
(\Nklt(X),\sO_{\Nklt(X)}(H)) \cong (\PP^{k},\sO_{\PP^{k}}(1))
$$
or
 $$
(\Nklt(X),\sO_{\Nklt(X)}(H)) \cong (Q^{k},\sO_{Q^{k}}(1)),
$$
where $Q^k \subset \PP^{k+1}$ is a (possibly reducible) hyperquadric. 
\end{theorem}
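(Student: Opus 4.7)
The strategy is to single out a minimal log-canonical centre $V \subseteq \Nklt(X)$, classify it via Kawamata's subadjunction together with a Kobayashi--Ochiai / Fujita type theorem, and then globalise to the whole $\Nklt(X)$ using Koll\'ar's connectedness. Since $X$ is not klt, a minimal lc centre $V$ of the pair $(X,0)$ exists, and $V \subseteq \Nklt(X)$ forces $\dim V \leq k$. Kawamata's subadjunction with the ample Cartier divisor $H$ gives, for every small rational $\epsilon>0$, an effective $\bQ$-divisor $\Delta_V^{\epsilon}$ on $V$ such that $V$ is normal, $(V, \Delta_V^{\epsilon})$ is klt, and
$$K_V + \Delta_V^{\epsilon} \sim_{\bQ} (K_X + \epsilon H)|_V.$$
Substituting $K_X \sim -kH$ yields $-K_V \sim_{\bQ} (k-\epsilon) H|_V + \Delta_V^{\epsilon}$, so up to an arbitrarily small error $-K_V$ equals $(k-\epsilon)$ times the ample Cartier class $H|_V$ plus an effective divisor.

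Next one applies the Kobayashi--Ochiai / Fujita classification for klt polarised varieties: if $(V, L)$ is a normal polarised variety with $L$ ample Cartier, $(V, \Delta)$ is klt, and $-(K_V + \Delta) \sim_{\bQ} m L$ for some $m > 0$, then $m \leq \dim V + 1$, with equality forcing $(V, L) \cong (\PP^{\dim V}, \sO(1))$, and $m = \dim V$ forcing $V$ to be an irreducible hyperquadric with $L = \sO_V(1)$. Letting $\epsilon \to 0$ gives $\dim V \geq k-1$, so $\dim V \in \{k-1, k\}$. If $\dim V = k$, then $(V, H|_V) \cong (\PP^k, \sO(1))$ or $V$ is an irreducible hyperquadric; if $\dim V = k-1$, then $(V, H|_V) \cong (\PP^{k-1}, \sO(1))$.

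To handle the whole $\Nklt(X)$, note that by Koll\'ar's connectedness theorem $\Nklt(X)$ is connected and irreducible components of intersections of lc centres are again lc centres. If $\Nklt(X)$ is irreducible it must coincide with a $k$-dimensional minimal lc centre, giving the $\PP^k$ or the irreducible $Q^k$ case. Otherwise, each $k$-dimensional irreducible component $W$ of $\Nklt(X)$ can be made a minimal lc centre of a slightly perturbed pair, obtained by adding a small $\bQ$-divisor vanishing on all other components; applying the subadjunction+Kobayashi--Ochiai argument to each such $W$ classifies it as $\PP^k$ or an irreducible quadric. The intersection of two components is an lc centre of dimension $k-1$, and a parallel analysis, combined with the fact that $H$ restricts to the hyperplane class on each component, forces this intersection to be a linear $\PP^{k-1}$ sitting as a hyperplane in each component. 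Numerical constraints coming from $-K_X \sim kH$ then rule out more than two components and pin the union down to the linear embedding of a reducible hyperquadric in some ambient $\PP^{k+1}$.

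The main obstacle is the global step: the subadjunction is inherently local around a chosen minimal lc centre, whereas the theorem describes the full, possibly reducible, $\Nklt(X)$ together with its polarisation by $H$. The delicate point is ruling out exotic glueings; this requires combining Koll\'ar's connectedness, the perturbation trick that turns an arbitrary $k$-dimensional component into a minimal lc centre of a modified pair, and the numerical information that $H$ is an \emph{integral} Cartier divisor with $-K_X \sim kH$ on the ambient Fano variety $X$.
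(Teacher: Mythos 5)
The opening moves are the same as the paper's: use Kawamata subadjunction to classify a minimal lc centre via the Araujo--Druel/Fujita classification of log Fanos of high index, and invoke connectedness of $\Nklt(X)$ (the paper gets it from Nadel vanishing). The problem is your global step, and it contains genuine gaps.

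First, your claim that each $k$-dimensional irreducible component $W$ of $\Nklt(X)$ "can be made a minimal lc centre of a slightly perturbed pair, obtained by adding a small $\bQ$-divisor vanishing on all other components" is not correct as stated. Tie-breaking by adding a small effective divisor lets you isolate $W$ among lc centres \emph{not contained in $W$}, but it cannot remove a smaller lc centre $Z \subsetneq W$: adding an effective boundary only makes discrepancies drop, so any $Z \subset W$ with discrepancy $-1$ for $(X,0)$ stays (or gets worse) after perturbation, and $W$ remains non-minimal. Since the boundary of the original pair is $0$, there is nothing to subtract either. The paper instead proves a subadjunction formula directly on the \emph{normalisation} of a possibly non-minimal lc centre (Lemma~\ref{lemmasubadjunction}), whose extra content is that the induced boundary $\Delta_{W^n}$ set-theoretically contains the preimage of any $(k-1)$-dimensional sub-lc-centre $Z$; combined with Araujo--Druel this forces $W^n \cong \PP^k$ (the quadric case is excluded because there $\Delta_{W^n}=0$), and then Ambro's semi-normality gives $W \cong \PP^k$ and the \emph{uniqueness} of $Z$ (Proposition~\ref{propositioncentrek}). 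You never obtain this refinement. The same issue affects your irreducible case: an irreducible $\Nklt(X)$ need not be a minimal lc centre (it may contain a vertex-type centre), so you cannot immediately apply the minimal-centre version of subadjunction there.

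Second, "numerical constraints coming from $-K_X \sim kH$ then rule out more than two components" is asserted with no mechanism. This is where the refinement above is actually used in the paper: since each non-minimal $k$-dimensional component is $\PP^k$ with \emph{exactly one} $(k-1)$-dimensional sub-lc-centre, and two distinct components must meet in such a sub-centre, the combinatorics cap the number of components at two, and the intersection is a linear $\PP^{k-1}$ by Lemma~\ref{lemmaestimate}. Without the "exactly one sub-lc-centre" statement (which your $\epsilon$-subadjunction plus Kobayashi--Ochiai does not deliver), there is no obvious way to bound the number of components or to control how they glue. So the structure of your argument breaks precisely at the point you flagged as "the main obstacle"; the missing ingredient is a subadjunction statement valid for non-minimal lc centres together with the resulting effectivity of the induced boundary.
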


This local result gives some interesting global information:

\begin{corollary}
Let $X$ be a Fano variety of index $k$ with  lc singularities and such that $\dim \Nklt(X) = k$. Then $X$ is rationally chain-connected.
\end{corollary}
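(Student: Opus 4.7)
The plan is to combine the very explicit description of $\Nklt(X)$ provided by Theorem \ref{theoremnkltlocus} with the general theory of maximal rationally chain connected (MRCC) fibrations for log canonical Fano pairs. By Theorem \ref{theoremnkltlocus} the pair $(\Nklt(X),\sO_{\Nklt(X)}(H))$ is either $(\PP^k,\sO_{\PP^k}(1))$ or a (possibly reducible) hyperquadric $(Q^k,\sO_{Q^k}(1))$ in $\PP^{k+1}$; hence $\Nklt(X)$ is itself rationally chain connected. Indeed $\PP^k$ is even rational, while any such hyperquadric --- including the reducible ones, whose components meet pairwise in a lower-dimensional quadric --- is covered by lines, so any two of its points can be joined by a short chain of lines.

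Next I would invoke the following consequence of Hacon--McKernan's theorem on Shokurov's rational connectedness conjecture, in its log canonical incarnation: the MRCC fibration $\phi: X \dashrightarrow Z$ of the lc Fano variety $X$ has the property that $\Nklt(X)$ dominates $Z$. In the lc (non-klt) case at hand one reduces to the dlt statement by passing to a dlt modification $\pi: Y \to X$, so that $K_Y + \Delta_Y = \pi^*K_X$ with $(Y,\Delta_Y)$ dlt and $-(K_Y+\Delta_Y)$ nef and big, applying the RCC theorem for dlt Fano pairs on $Y$, and then descending rational chain connectivity to $X$ via surjectivity of $\pi$ (images of rational chains stay connected, since contracted rational curves are absorbed into the neighbouring links of the chain).

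Combining these two ingredients: since $\Nklt(X)$ is rationally chain connected by the first step, all of its points lie in a single fibre of $\phi$, so $\phi|_{\Nklt(X)}$ is constant; since this fibre dominates $Z$, the base $Z$ must be a point, i.e.\ $X$ is rationally chain connected. The main point needing care is the precise statement and reference for the MRCC dichotomy for lc Fano pairs; the geometric input, namely the rational chain connectivity of $\Nklt(X)$, is immediate from Theorem \ref{theoremnkltlocus}.
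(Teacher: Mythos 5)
Your proposal is correct and uses the same two ingredients the paper uses: the description of $\Nklt(X)$ from Theorem \ref{theoremnkltlocus} (which immediately gives that $\Nklt(X)$ is rationally chain connected, indeed rational and connected), and a theorem asserting that a Fano variety with lc singularities is rationally chain connected modulo its non-klt locus. The paper simply cites Broustet--Pacienza \cite[Thm.1.2]{BP11} for this second ingredient, which is exactly the ``MRCC dichotomy for lc Fano pairs'' whose precise statement you correctly identified as the point needing care; the dlt-modification reduction to Hacon--McKernan that you sketch is close to how \cite{BP11} is proved, but it is not an off-the-shelf consequence of \cite{HMK07} (whose main theorems are stated for klt pairs), so directly citing \cite{BP11} is cleaner. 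Two small remarks on the concluding step: the paper's argument is the direct one --- every point of $X$ is connected by a rational chain to the connected, rationally chain connected set $\Nklt(X)$, so all points are mutually connected --- which avoids any discussion of the MRCC fibration; your phrasing via $\phi|_{\Nklt(X)}$ being constant is also fine, but strictly speaking one should argue that $\Nklt(X)$ being rational and dominating $Z$ forces $Z$ (which is not uniruled by \cite{GHS03}) to be a point, rather than appealing to the MRCC fibration contracting arbitrary rational chains, a property that only holds at general points.
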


Indeed by a result of Broustet and Pacienza \cite[Thm.1.2]{BP11} the variety $X$ is rationally chain-connected modulo
the non-klt locus. Since $\Nklt(X)$ is rationally chain-connected the statement follows.

We continue the study of the case $\dim \Nklt(X)=k$ by considering a terminal modification $X' \rightarrow
X$ (cf. Definition \ref{definitionterminal}). 
If $\Nklt(X)$ is a linear $\PP^{1}$ the variety $X'$ can be rationally connected and have a very rich birational
geometry, cf. \cite{Ish91, Ish94}. If $\Nklt(X)$ is a conic the variety $X'$ is never rationally connected and
we obtain a precise classification:

\begin{theorem} \label{theoremkone}
Let $X$ be a Fano variety (of index $1$) and dimension $n \geq 3$ with lc singularities. Suppose that $\Nklt(X)$ is a curve
and $-K_X \cdot \Nklt(X)=2$. Let $\mu: X' \rightarrow X$ be a terminal modification. 
Then the base of the MRC-fibration $X' \dashrightarrow Z$ has dimension $n-2$ and the general fibre
$F$ polarised by $\sO_F(- \mu^* K_X)$ is a linear $\PP^2$, a quadric, a Veronese surface or a ruled surface.
\end{theorem}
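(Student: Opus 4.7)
The plan is to run the minimal model program on $X'$, obtaining a Mori fibre space whose fibres will be forced by the conic condition to have dimension two and to lie in a short list of polarised surfaces. Write $C=\Nklt(X)$, which by Theorem \ref{theoremnkltlocus} is a (possibly reducible) conic embedded in $\PP^{2}$ via $H$. The terminal modification $\mu\colon X'\to X$ (or a $\mathbb{Q}$-factorial dlt modification refining it) satisfies $K_{X'}+E=\mu^{*}K_{X}$ with $E$ reduced, $(X',E)$ dlt, and every irreducible component $E_{i}$ of $E$ surjecting onto a component of $C$. By adjunction on $E_{i}$, the restriction $(K_{X'}+E)|_{E_{i}}=K_{E_{i}}+\Delta_{E_{i}}$ equals $\mu^{*}K_{X}|_{E_{i}}$, which is trivial on fibres of $\mu|_{E_{i}}\colon E_{i}\to C$; hence the general such fibre is an $(n-2)$-dimensional log Calabi--Yau pair, in particular generically non-uniruled. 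Since $-K_{X'}=\mu^{*}(-K_{X})+E$ is big, a $K_{X'}$-MMP with scaling terminates in a Mori fibre space $\varphi\colon Y\to Z$.

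The hypothesis $-K_{X}\cdot C=2$, via pullback to $X'$ and tracking through the MMP, supplies rational curves whose class lies in the extremal ray contracted by $\varphi$ and has $(-K_{Y})$-degree at most $2$; the length $\ell(R)$ of this ray is therefore at most $2$, and the Cone Theorem length estimate $\ell(R)\le\dim F+1$ yields $\dim F\ge 1$ for the general fibre $F$. To exclude $\dim F=1$, a generic conic-bundle structure would force the birational transform of $E$ either to dominate $Z$ --- which would produce a non-trivial uniruling of the generic fibre of $E_{i}\to C$ by $\varphi$-lines, contradicting its non-uniruledness --- or to lie in a single $\varphi$-fibre, which is incompatible with $\dim E=n-1$. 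To exclude $\dim F\ge 3$, a generic MRC fibre would cut a positive-dimensional rationally connected subvariety out of a generic log Calabi--Yau fibre of $E_{i}\to C$, forcing the latter to be uniruled, again a contradiction. Hence $\dim F=2$ and $\dim Z=n-2$, and since the negative part of $K_{X'}$ is entirely absorbed by $\varphi$, the base $Z$ carries no further uniruling, so $\varphi$ coincides with the MRC fibration of $X'$.

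The generic $F$ is a normal $\mathbb{Q}$-Gorenstein projective surface carrying the Cartier polarisation $L=\sO_{F}(-\mu^{*}K_{X})$; on such $F$ the restriction of $E$ contributes only a fixed boundary absorbed by adjunction, so $-K_{F}$ is numerically proportional to $L$. Combining the Kobayashi--Ochiai theorem with the classical classification of polarised del Pezzo and ruled surfaces of small degree then leaves exactly the four possibilities $(\PP^{2},\sO(1))$, $(Q^{2},\sO(1))$, $(\PP^{2},\sO(2))$ (the Veronese surface), and a polarised ruled surface. I expect the main obstacle to be the analysis in the second paragraph: controlling the divisor $E$ and the non-uniruledness of the fibres of $E_{i}\to C$ through the flips and divisorial contractions of the MMP, and verifying that the Mori fibration obtained really is the MRC fibration rather than a proper refinement.
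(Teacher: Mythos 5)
Your plan replaces the paper's cohomological argument with an MMP-based one, and unfortunately this route has several gaps that I do not think are easily repaired.

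The paper's proof of $\dim Z = n-2$ is not via an MMP on $X'$. It cuts $X$ by a general anticanonical divisor $Y$, uses that $Y$ has trivial canonical sheaf and exactly \emph{two} isolated non-klt points (this is where the hypothesis $-K_X\cdot\Nklt(X)=2$ enters, via Theorem \ref{theoremnkltlocus}), and shows via Kov\'acs vanishing and duality on the exceptional divisor of the canonical modification of $Y$ that $h^{n-2}(Y',\sO_{Y'})\neq 0$ (Lemma \ref{lemmanotvanishing}); then the surjection $H^{n-2}(X',\sO_{X'})\twoheadrightarrow H^{n-2}(Y',\sO_{Y'})$ gives $h^{n-2}(X',\sO_{X'})\neq 0$, which bounds $\dim Z\geq n-2$ since $h^q(\sO)$ is a birational invariant for canonical singularities. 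The opposite inequality comes from exhibiting uniruled divisors (the strict transforms $Y'_1$) not contracted by the MRC fibration. Your argument never captures the ``two points'' phenomenon; the intersection number $2$ only enters as a putative length bound.

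That length bound does not work as stated. To apply the Ionescu--Wi\'sniewski estimate $\ell(R)\leq\dim F+1$ you would need a rational curve \emph{in the extremal ray} of the terminal Mori fibre space $Y$ with small $(-K_Y)$-degree, but a curve in $X'$ mapping to $C=\Nklt(X)$ satisfies $-K_{X'}\cdot C'=-\mu^*K_X\cdot C' + E\cdot C'$, and the term $E\cdot C'$ is not controlled; moreover such a curve need not survive the MMP, nor lie in a fibre of $\varphi$. Even granting $\ell(R)\leq 2$, the inequality $\ell(R)\leq\dim F+1$ only gives $\dim F\geq\ell(R)-1$, which for $\ell(R)\leq 2$ is vacuous; your ``$\dim F\geq 1$'' is just the fact that Mori fibre spaces have positive-dimensional fibres and is not a consequence of the length estimate.

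The assertion that the general fibre of $E_i\to C$ is non-uniruled is false in general: that fibre is a log Calabi--Yau pair $(G,\Delta_G)$ with $K_G+\Delta_G\equiv 0$, and if $\Delta_G\neq 0$ (which happens whenever $E$ has several components along $E_i$ or $X'$ is singular along $E_i$) then $-K_G$ is effective and $G$ can very well be uniruled; the paper's Example \ref{exampleprojectivebundles} gives such fibres. Relatedly, the case analysis excluding $\dim F=1$ is incomplete ($E$ could dominate a proper positive-dimensional subvariety of $Z$), the identification of the terminal Mori fibre space with the MRC fibration is unjustified (these differ in general, and nothing you say forces $K_Z$ to be pseudoeffective), and in the final paragraph ``$-K_F$ is numerically proportional to $L$'' is not established: one has $K_F+\Delta_F\equiv -L$ with $\Delta_F$ possibly nonzero, which is exactly why the paper classifies the \emph{log} pairs in Proposition \ref{propositionindexk} rather than applying Kobayashi--Ochiai to $F$ alone. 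The paper's Proposition \ref{propositionMRC}, feeding Proposition \ref{propositionindexk}, is the step that produces the list in the statement and keeps track of the boundary throughout.
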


In fact we know much more. The base $Z$ has an effective canonical divisor, moreover we can construct
birational models of $X$ which have a very simple fibre space structure:

\begin{proposition} \label{propositionfibrespace}
Let $X$ be a Fano variety of dimension $n$ and index $k$ with  lc singularities such that $\dim \Nklt(X) = k$. Suppose that the base of the MRC-fibration $X' \dashrightarrow Z$ has dimension $n-k-1$. Then there exists 
a normal projective variety $\Gamma$ admitting a birational morphism
\holom{p}{\Gamma}{X} and an equidimensional fibration \holom{q}{\Gamma}{\Hilb} onto a projective  manifold ${\mathcal H}$ such that one of the following holds:
\begin{enumerate}
\item the fibration $q$ is a projective bundle;
\item the fibration $q$ is a quadric fibration;
\item the general $q$-fibre is a Veronese surface.
\end{enumerate}
\end{proposition}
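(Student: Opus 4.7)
The plan is to start from a terminal modification $\mu : X' \to X$ and the MRC-fibration $\varphi : X' \dashrightarrow Z$, whose general fibre has dimension $k+1$ by hypothesis. After resolving the indeterminacies of $\varphi$ and applying Raynaud--Gruson flattening followed by normalisation, one obtains an equidimensional projective fibration $q : \Gamma \to \mathcal H$ onto a smooth projective variety $\mathcal H$, together with a birational morphism $p : \Gamma \to X$ factoring through $\mu$. This produces the diagram in the statement; the real work is identifying the general fibre of $q$.

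Let $N := \Nklt(X)$; by Theorem \ref{theoremnkltlocus}, $(N, \sO_N(H))$ is $(\PP^k, \sO(1))$ or a polarised quadric $(Q^k, \sO(1))$. Since $X$ is rationally chain-connected while $\Gamma$ is not (its MRC base is $\mathcal H$ of positive dimension), the exceptional divisor $p^{-1}(N) \subset \Gamma$ must dominate $\mathcal H$ under $q$: otherwise some MRC class of $X$ would be entirely disjoint from $N$, contradicting $X$ being RCC. Hence for a general $z \in \mathcal H$ the image $V_z := p(F_z)$ is a $(k+1)$-dimensional rationally connected subvariety of $X$ containing $N$ as a Weil divisor.

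To classify $(V_z, H|_{V_z})$ I would use subadjunction along $V_z \hookrightarrow X$ to promote the normalisation $\tilde V_z$ to a lc Fano variety of index $k$ whose non-klt locus contains $N$ and has dimension $k = \dim \tilde V_z - 1$. Applying Theorem \ref{theorembasicestimate} to $\tilde V_z$ then forces it to be a generalised polarised cone with vertex $(\PP^k, \sO(1))$ or $(Q^k, \sO(1))$. A short enumeration of the polarised generalised cones of dimension $k+1$ and index $k$ with such a vertex yields exactly three possibilities for $(V_z, H|_{V_z})$: a linear $(\PP^{k+1}, \sO(1))$, a hyperquadric $(Q^{k+1}, \sO(1))$, or --- only when $k=1$ and $N$ is a smooth conic --- the Veronese surface $(\PP^2, \sO(2))$.

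Finally, I would globalise the fibre type. With the fibre type constant over a dense open subset of $\mathcal H$ and $p^* H$ relatively ample (after absorbing further birational modifications of $\mathcal H$ into the choice of $\mathcal H$), a flat equidimensional family of linearly embedded $\PP^{k+1}$'s is a projective bundle (possibly after an \'etale base change to trivialise a relative Brauer class, also absorbed), and an equidimensional family of polarised quadric hypersurfaces of fixed degree is a quadric fibration; in the Veronese case no stronger structure is claimed. The main technical obstacle I anticipate is the fibre classification: making subadjunction apply on the \emph{a priori} non-normal $V_z$, which is not a log canonical centre of $X$; controlling the Cartier index of $H|_{\tilde V_z}$ so that Theorem \ref{theorembasicestimate} applies with the correct codimension; and ensuring uniformity of the fibre type across $\mathcal H$.
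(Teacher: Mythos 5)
Your outline diverges from the paper's argument in a way that leaves several real gaps. The paper classifies the pair $(F, E|_F)$, where $F$ is the general MRC-fibre inside the terminal modification $X'$ and $E$ is the discrepancy divisor: since $F$ has terminal singularities and $(F, E|_F)$ is lc, Proposition \ref{propositionindexk} gives a clean list a)--g) (g) excluded by rational connectedness). You instead try to classify the \emph{image} $V_z = p(F_z) \subset X$, and this is where things break down.

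First, $V_z$ is not a log canonical centre of $X$, so Kawamata subadjunction \eqref{subadjunction} and Lemma \ref{lemmasubadjunction} do not apply to it; you flag this yourself, but without resolving it the step ``promote $\tilde V_z$ to an lc Fano of index $k$'' has no justification. Second, even granting that $\tilde V_z$ were an lc Fano of index $k$ and dimension $k+1$ with $\dim \Nklt(\tilde V_z) = k$, Theorem \ref{theorembasicestimate} characterises generalised cones precisely in the boundary case $\dim \Nklt = \text{index} - 1$, not $\dim \Nklt = \text{index}$; so it would not yield a cone structure here (Theorem \ref{theoremnkltlocus} would apply, but it only describes $\Nklt$). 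Third, and most concretely, your enumeration of possible fibres is incomplete: besides $\PP^{k+1}$, $Q^{k+1}$ and the Veronese cone, the list in Proposition \ref{propositionindexk} contains the scroll cases d)--f) over $\PP^1$, and these actually occur (Examples \ref{exampledimk} and \ref{exampleprojectivebundles}). The paper handles the scroll cases by observing that the MRC-fibration then factors through an almost holomorphic $\PP^{k-1}$-fibration, and the Chow-family construction produces a $\PP^{k-1}$-bundle, landing in case a) of the proposition with a smaller fibre dimension than your $V_z$. Your classification would also silently fail to justify the premise ``$N \subset V_z$ as a Weil divisor'': the rational chain-connectedness argument only gives $V_z \cap N \neq \emptyset$, not $N \subseteq V_z$. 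Finally, the construction of $q$ itself is different: the paper takes the normalised pull-back of the universal family over a desingularised component of the cycle space $\Chow{X}$, then invokes \cite[Prop.4.10]{AD12}, \cite[Prop.3.5]{a18} for the bundle case and Lemma \ref{lemmaquadricbundle} for the quadric case, rather than flattening the MRC map by Raynaud--Gruson; this choice is what makes the bundle and quadric-fibration structure results applicable over all of $\Hilb$.
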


In Section \ref{sectionexamples} we show that all the classification results in this paper 
are effective, i.e. there exist examples realising all the cases in Theorem \ref{theoremkone} and Proposition \ref{propositionfibrespace}.
The condition $\dim Z=n-k-1$ may seem rather ad-hoc, but we prove that it is satisfied if $\Nklt(X)$ is a
quadric and $k=1$ or $k \geq n-3$ (cf. Proposition \ref{propositionladder}).
Actually we prove that if $X$ admits a ladder (in the sense of Fujita, cf. Definition \ref{definitionladder}), then $H^{n-k-1}(X', \sO_{X'}) \neq 0$. We expect that any Fano variety with lc singularities admits a ladder, but this depends on the
difficult non-vanishing conjecture \cite[Conj.2.1]{Kaw00}.

{\bf Acknowledgements.}  The first and the third authors thank the Research Network Program ``GDRE-GRIFGA'' for partial support.
The second-named author was partially supported by the A.N.R. project ``CLASS''. 

\begin{center}
{\bf Notation}
\end{center}

We work over the complex numbers, topological notions always refer to the Zariski topology.
For general definitions we refer to \cite{Har77}.
The dimension of an algebraic variety is defined as the maximum of the dimension of its irreducible components.

On a normal variety we will denote by $\simeq$ the linear equivalence of {\em Cartier} divisors,
while $\sim_\Q$ (resp. $\equiv$) will be used for the $\Q$-linear (resp. numerical) equivalence
of $\Q$-Cartier $\Q$-divisors.

We will frequently use the terminology and results 
of the minimal model program (MMP) as explained in \cite{KM98, Deb01, Kol13}. In particular klt stands for ``Kawamata log terminal", 
dlt for ``divisorial log terminal", plt for ``purely log terminal", and lc for ``log canonical'' singularities.

\begin{definition} \label{definitioncanonical}
Let $X$ be a normal variety. The {\rm canonical modification} of $X$
is the unique projective birational morphism \holom{\mu}{X'}{X} from a normal
variety $X'$ with  canonical singularities such that $K_{X'}$ is $\mu$-ample.
\end{definition}

\begin{remarks} \label{remarkscanonical}
The existence of the canonical modification is a consequence
of \cite{BCHM10}, cf. the forthcoming book \cite{Kol13}. If $K_X$ is $\Q$-Cartier we have
$$
K_{X'} \sim_\Q \mu^* K_X - E,
$$
where $E$ is an effective $\Q$-divisor whose support is the exceptional locus of $\mu$.

Let us recall that a normal variety is Gorenstein if it is Cohen--Macaulay and the canonical divisor is Cartier.
Since canonical singularities are Cohen--Macaulay the first condition is empty for $X'$, so the Gorenstein
locus of $X'$ is the open subset where $K_{X'}$ is Cartier.
\end{remarks}

An important advantage of canonical models is their uniqueness, but their singularities can be rather complicated. 
One can improve the singularities by losing uniqueness:
  
\begin{definition} \label{definitionterminal}\cite{Kol13}
Let $X$ be a normal variety. A {\rm terminal modification} of $X$
is a projective birational morphism \holom{\mu}{X'}{X} from a normal
variety $X'$ with terminal singularities such that $K_{X'}$ is $\mu$-nef.
\end{definition}

We will use the standard definitions and results of adjunction theory from \cite{Fuj90, BS95},
except the following generalised version of the nefvalue of a Mori contraction:

\begin{definition} \label{definitionnefvalue}
Let $X$ be a normal quasi-projective variety with lc singularities, and
let $H$ be a nef and big Cartier divisor on $X$.
Let $\holom{\varphi}{X}{Y}$ be the contraction of a $K_X$-negative extremal 
ray $R$ such that $H \cdot R>0$. The {\rm nefvalue} $r := r(\varphi, H)$
is the positive number such that
$(K_X+r H) \cdot R=0$.
\end{definition}

Let us recall a well-known consequence of the cone theorem:

\begin{lemma} \label{lemmacontraction}
 Let $X$ be a normal projective variety
with klt singularities such that
$$
-K_X \sim_\Q N + E,
$$
where $N$ is a nef $\Q$-Cartier $\Q$-divisor and $E$ is a non-zero effective $\Q$-divisor. Suppose also that for every curve $C \subset X$ such that $N \cdot C=0$ we have $K_X \cdot C \geq 0$. 
Then there exists a $K_X$-negative extremal ray $R$ such that $E \cdot R>0$ and $N \cdot R>0$.
\end{lemma}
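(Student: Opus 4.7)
The plan is to split the conclusion into its two assertions and observe that one of them is essentially automatic. Given a $K_X$-negative extremal ray $R$, I first claim that $N \cdot R > 0$: if instead $N \cdot R = 0$, then picking any curve $C$ with class in $R$ would give $N \cdot C = 0$, and the standing hypothesis would force $K_X \cdot C \geq 0$, contradicting $K_X \cdot R < 0$. Hence the substantive task is to produce a $K_X$-negative extremal ray with $E \cdot R > 0$.

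I would proceed by contradiction: assume that $E \cdot R_i \leq 0$ for every $K_X$-negative extremal ray $R_i$ provided by the cone theorem (which applies since $X$ is klt). First I note that $E$ is automatically $\Q$-Cartier, because $E \sim_\Q -K_X - N$ and both $K_X$ and $N$ are $\Q$-Cartier. Since $E$ is a non-zero effective $\Q$-Cartier divisor, cutting with $(n-1)$ general members of a very ample linear system produces a curve $C_0$ with $E \cdot C_0 > 0$.

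Next I would use the $\Q$-linear equivalence $E \sim_\Q -K_X - N$ to evaluate $E$ on the cone $\NE(X)_{K_X \geq 0}$: for any class $w$ there,
$$E \cdot w \;=\; -K_X \cdot w - N \cdot w \;\leq\; 0,$$
since both $-K_X \cdot w \leq 0$ and $-N \cdot w \leq 0$ (the latter because $N$ is nef). Combined with the standing assumption $E \cdot R_i \leq 0$, the linear functional defined by $E$ is non-positive on $\NE(X)_{K_X \geq 0} + \sum_i \mathbb{R}_+ R_i$, and hence on all of $\NE(X)$ by continuity and the cone theorem. This contradicts $E \cdot C_0 > 0$, giving the desired extremal ray.

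The only delicate point is handling the closure in the cone theorem, since the $K_X$-negative extremal rays can accumulate on the hyperplane $\{K_X = 0\}$. To avoid any subtlety, I would instead invoke the finite version: for any ample divisor $A$ and any $\varepsilon > 0$ there are finitely many extremal rays $R_i$ with $(K_X + \varepsilon A) \cdot R_i < 0$, and $\NE(X) = \NE(X)_{K_X + \varepsilon A \geq 0} + \sum_{\text{finite}} \mathbb{R}_+ R_i$. Repeating the computation with this decomposition gives $E \cdot z \leq \varepsilon \, A \cdot z$ for all $z \in \NE(X)$, and letting $\varepsilon \to 0$ yields $E \cdot z \leq 0$ as before. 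I do not expect any serious obstacle beyond this bookkeeping.
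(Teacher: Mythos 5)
Your proof is correct and follows essentially the same route as the paper: assume $E$ is non-positive on all $K_X$-negative extremal rays, use $E \sim_\Q -K_X - N$ to see $E$ is also non-positive on the $K_X$-non-negative part of $\NE{X}$, invoke the cone theorem to conclude $-E$ is nef, and derive a contradiction from $E$ being non-zero effective; the observation that $N\cdot R>0$ is automatic from the hypothesis is likewise identical. The only difference is that you spell out the closure/accumulation issue via the $\varepsilon$-refined cone theorem, which the paper leaves implicit in the phrase ``by the cone theorem.''
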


\begin{proof} 
Arguing by contradiction we suppose that $-E \cdot R \geq 0$ for all $K_X$-negative extremal rays. If $C \subset X$
is a curve such that $K_X \cdot C \geq 0$, then 
$$
-E \cdot C = K_X \cdot C + N \cdot C \geq 0.
$$
By the cone theorem
this implies that the antieffective divisor $-E$ is nef, a contradiction. The property $N \cdot R>0$ is trivial since
$N$ is positive on all $K_X$-negative curves. 
\end{proof}

\section{Proof of Theorem \ref{theorembasicestimate}}

Before we can prove Theorem \ref{theorembasicestimate} we need a technical lemma which 
replaces an inaccurate statement\footnote{The statement is $\dim F \geq r$, but the proof only yields $\dim F \geq \lfloor r \rfloor$.} in \cite[Thm.2.1(II,i)]{And95}.

\begin{lemma} \label{lemmarounddown}
Let $X$ be a normal quasi-projective variety with  canonical singularities,
and let $H$ be a nef and big Cartier divisor on $X$.
Let $\holom{\varphi}{X}{Y}$ be a birational projective morphism with connected fibres
onto a normal variety $Y$ such that $H$ is $\varphi$-ample and 
$K_X+rH$ is $\varphi$-numerically trivial for some $r>0$.
Fix a point $y \in Y$ and suppose that all the irreducible components
of $\fibre{\varphi}{y}$ have dimension at most $\lfloor r \rfloor$.
Suppose that there exists an irreducible component $F \subset \fibre{\varphi}{y}$
that meets the Gorenstein locus of $X$.
Then we have $\lfloor r \rfloor=r$.
\end{lemma}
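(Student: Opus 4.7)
We argue by contradiction: suppose $r \notin \bZ$, so $r > \lfloor r \rfloor \geq d := \dim F$ (the case $d=0$, where $\fibre{\varphi}{y}$ is a finite set and $\varphi$ is an isomorphism near $y$, is excluded since no contracted curve at $y$ then contributes to the nefvalue). The plan is to construct a $\varphi$-contracted rational curve $\ell \subset F$ through a Gorenstein point of $X$, arranged to lie entirely in $X^{\mathrm{Gor}}$, and to derive a contradiction from the resulting integrality of intersection numbers.

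First, I would fix $x \in F \cap X^{\mathrm{Gor}}$. Since $K_X + rH \equiv_\varphi 0$ and $H|_F$ is ample (from $\varphi$-ampleness of $H$), we get $-K_X|_F \equiv_\bQ r H|_F$, so $-K_X|_F$ is ample on $F$ as a $\bQ$-Cartier class. Mori-type bend-and-break on $F$ with this ample class then produces a rational curve $\ell \subset F$ through $x$ with $0 < -K_X \cdot \ell \leq d+1$.

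Next, I would leverage the Gorenstein hypothesis. Because $X$ has canonical (hence Cohen--Macaulay) singularities and is regular in codimension one, the non-Gorenstein locus of $X$ has codimension at least two. By a deformation-theoretic genericity argument for rational curves through $x$ in $F$---using this codimension bound and the fact that $x \in X^{\mathrm{Gor}}$---I arrange $\ell$ to lie in $X^{\mathrm{Gor}}$. Then $K_X|_\ell$ is a genuine Cartier divisor on the normalisation of $\ell$, so $-K_X \cdot \ell \in \bZ_{>0}$ and $H \cdot \ell \in \bZ_{>0}$.

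Finally, since $\ell \subseteq F \subseteq \fibre{\varphi}{y}$ is $\varphi$-contracted, the nefvalue equation gives $-K_X \cdot \ell = r \cdot H \cdot \ell$. Combined with $r > d$, $H \cdot \ell \geq 1$, and $-K_X \cdot \ell \leq d+1$,
\[
d \cdot (H \cdot \ell) \;<\; r \cdot (H \cdot \ell) \;=\; -K_X \cdot \ell \;\leq\; d+1,
\]
so $d \cdot (H \cdot \ell) < d+1$, forcing $H \cdot \ell = 1$ (since $d \geq 1$ and $H \cdot \ell$ is a positive integer). Hence $r = -K_X \cdot \ell \in \bZ$, contradicting $r \notin \bZ$. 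The main obstacle will be rigorously producing the rational curve $\ell \subset X^{\mathrm{Gor}}$ through $x$ with $-K_X \cdot \ell \leq d+1$: this requires combining Mori-type bend-and-break on the (possibly singular) $F$ with a deformation-theoretic genericity argument, leveraging the codimension-two bound on the non-Gorenstein locus together with the Gorenstein-ness of $x$.
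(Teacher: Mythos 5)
Your approach is genuinely different from the paper's, but it has a real gap, which you partially acknowledge at the end; I don't think the gap is merely technical.

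The paper's proof first invokes \cite[Thm.,p.740]{AW93} and \cite[Thm.2.1(II,ii)]{And95} to show that every component of $\fibre{\varphi}{y}$ is isomorphic to $\PP^{\lfloor r \rfloor}$ with $H$ restricting to $\sO(1)$, then inducts down by taking a general member of $|H|$ (which stays normal, canonical, and cuts the fibre to the expected dimension), and finally handles the base case $\lfloor r \rfloor = 1$, where $F \cong \PP^1$, by quoting \cite[Lemma 1]{Ish91}: for a $\PP^1$ contained in a fibre and \emph{meeting} the Gorenstein locus, $K_X \cdot F \geq -1$. Crucially, Ishii's lemma only needs the curve to intersect $X^{\rm Gor}$, not to lie inside it.

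Your plan instead tries to produce, via bend-and-break on $F$, a rational curve $\ell$ through a chosen Gorenstein point $x$ with $0<-K_X\cdot\ell\le d+1$ and with $\ell\subset X^{\rm Gor}$, then use integrality. There are two problems. First, the bend-and-break degree bound controls $-K_F\cdot\ell$ (or more precisely the canonical class of the ambient variety one runs bend-and-break on), not an arbitrary ample class such as $(-K_X)|_F$; there is no adjunction relating $K_F$ to $K_X|_F$ along a fibre of a contraction, so the inequality $-K_X\cdot\ell\le \dim F+1$ through a \emph{prescribed} point is not a direct consequence of standard bend-and-break on the possibly very singular $F$. (This is precisely the content of the nontrivial inputs \cite{And95,AW93,Ish91} that the paper uses.) Second, and more seriously, the genericity step to force $\ell\subset X^{\rm Gor}$ cannot be carried out in general: the non-Gorenstein locus of $X$ has codimension $\ge 2$ in $X$, but $F$ is contained in the exceptional locus and is far from generic, so $X^{\rm non\text{-}Gor}\cap F$ can perfectly well be a divisor in $F$. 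In that case the cone of lines (or rational curves) through $x$ hitting this divisor fills up all of $F$, and no choice of $\ell$ through $x$ can avoid it. Hence $K_X\cdot\ell$ need not be an integer and your final integrality contradiction collapses. The weaker condition ``$F$ meets $X^{\rm Gor}$'' is exactly what Ishii's lemma is engineered for, and is robust in a way your ``$\ell\subset X^{\rm Gor}$'' requirement is not.

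To repair your argument you would essentially need the structural input $F\cong\PP^{\lfloor r\rfloor}$ from \cite{And95} (so that lines give $H\cdot\ell=1$ for free, removing the need for bend-and-break degree control) together with a substitute for Ishii's discrepancy-type estimate that tolerates $\ell$ passing through non-Gorenstein points; at that stage you would have reconstructed the paper's proof.
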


\begin{proof}
The statement is local on the base, so we can assume that $Y$ is affine. In particular
every $\varphi$-generated line bundle is globally generated.

By \cite[Thm.,p.740]{AW93} we know that $H$ is $\varphi$-globally generated, moreover by
\cite[Thm.2.1(II,ii)]{And95} all the irreducible components of $\fibre{\varphi}{y}$
are isomorphic to $\PP^{\lfloor r \rfloor}$ and $H|_F$ is a hyperplane divisor.

We will proceed by induction on $\lfloor r \rfloor$.
For the case $\lfloor r \rfloor=1$ note that by \cite[Lemma 1]{Ish91}
we have $K_X \cdot F \geq -1$. Since $H \cdot F=1$ we obtain that $r \leq 1$. 
For the induction step choose $X' \in |H|$ a general divisor,
and consider the induced birational morphism $\holom{\varphi|_{X'}}{X'}{\varphi(X')}$.
Since $H$ is $\varphi$-globally generated the variety $X'$ is normal with
 canonical singularities, moreover the fibre \fibre{\varphi|_{X'}}{y}
has pure dimension $\lfloor r \rfloor - 1$ and $F \cap X'$ meets the Gorenstein
locus of $X'$. By adjunction we know that $K_{X'}+(r-1) H|_{X'}$ is $\varphi|_{X'}$-numerically trivial,
so by the induction hypothesis we have $\lfloor r - 1 \rfloor = r - 1$.
\end{proof}

\begin{proof}[Proof of Theorem \ref{theorembasicestimate}]
Let $\holom{\mu}{X'}{X}$ be the canonical modification of $X$ (cf. Definition \ref{definitioncanonical}). Then we have
$$
K_{X'} = \mu^* K_X + \sum_{E_i \ \mbox{\tiny $\mu$-exc.}} a_i E_i,
$$
and $a_i \leq -1$ for all $i$. 
Set $E:=- \sum_{E_i \ \mbox{\tiny $\mu$-exc.}} a_i E_i$, then $E$ is an effective divisor mapping onto $\Nklt(X)$. Since $K_X$ is Cartier, the non-Gorenstein locus of $X'$ is contained in $E$.

By Lemma \ref{lemmacontraction} there exists a $K_{X'}$-negative extremal ray $R$ on $X'$ such that
$E \cdot R>0$ and $\mu^* H \cdot R>0$. Let $\holom{\varphi}{X'}{Y}$ be the corresponding Mori contraction, and 
let $F$ be an irreducible component of a positive-dimensional $\varphi$-fibre.
Since $E$ is $\varphi$-ample, the intersection $E \cap F$ is non-trivial.
Since $E$ is $\Q$-Cartier we have
$$
\dim (F \cap E) \geq \dim F - 1,
$$
and equality holds if and only if $F \not\subset E$.
Since $K_X$ is $\mu$-ample and $\varphi$-antiample the morphism
$$
(F \cap E) \rightarrow \mu(F \cap E) 
$$
is finite, thus we have
\begin{equation} \label{eqndimfibres}
\dim \Nklt(X) = \dim \mu(E) \geq \dim \mu(F \cap E) = \dim (F \cap E) \geq \dim F -1, 
\end{equation}
and equality holds if and only if $\dim F= \dim \Nklt(X) +1$ and $F \not\subset E$.

Since $-K_{X'} \sim_\Q k \mu^* H + E$ we see that the nefvalue $r:=r(\varphi, H)$ (cf. Definition \ref{definitionnefvalue})
is strictly larger than $k$. By \cite[Thm.2.1(I,i)]{And95} this implies that $\dim F \geq r-1>k-1$.
By \eqref{eqndimfibres} we get $\dim \Nklt(X) > k-2$. Since $\dim \Nklt(X)$ is an integer the inequality in the statement
follows.

Suppose now that we are in the boundary case $\dim \Nklt(X)=k-1$. Arguing by contradiction 
we suppose that the extremal contraction $\varphi$ is birational. 
By \cite[Thm.2.1(II,i)]{And95} we then have $\dim F \geq \lfloor r \rfloor \geq k$. 
Thus equality holds in \eqref{eqndimfibres} and the variety $F$ is not contained in $E$. 
Therefore we satisfy
the conditions of Lemma \ref{lemmarounddown} and get $\lfloor r \rfloor = r$, so $r=k$ by (\ref{eqndimfibres}).
However by construction we have $r>k$, a contradiction.

Thus the contraction $\varphi$ is of fibre type; moreover all the irreducible components of each $\varphi$-fibre have dimension exactly $k$. By \cite[Thm.2.1(I,ii)]{And95} this implies that
$X' \rightarrow Y$ is a $\PP^k$-bundle. Let now $F \cong \PP^k$ be a general $\varphi$-fibre. Then $F$ is contained
in the smooth locus of $X'$, in particular all the divisors $E_i$ are Cartier in a neighborhood of $F$. By adjunction we see that
$$
\sO_F\big(\sum a_i E_i\big) \simeq \sO_{\PP^k}(-1).
$$ 
Since the left hand side is a sum of antieffective Cartier divisors, we see that (up to renumbering) we have $a_1=-1$ and $E_1 \cap F$ is a hyperplane. Moreover we have $E_i \cap F= \emptyset$ for all $i \geq 2$. In particular for every $i \geq 2$ we have
$E_i \simeq \varphi^* D_i$ with $D_i$ a Weil divisor on $Y$. However if we take $y \in D_i$ a point,
then $\fibre{\varphi}{y} \subset E_i$ and $-K_{X'}|_F$ is ample, so $F \rightarrow \mu(F) \subset \mu(E_i)$ is finite.
Since we assumed that $\dim \Nklt(X)= \dim \mu(E) =k-1$ this yields $E_i=0$ for all $i \geq 2$. Thus we have
$$
K_{X'} = \mu^* K_X - E_1.
$$
One easily sees that the pair $(X', E_1)$ is lc, so $X$ has lc singularities. The generalised cone structure is given
by the maps $\mu$ and $\varphi$.
\end{proof}

\begin{example} \label{examplenonlc}
Let $Y \subset \PP^3$ be a cone over a smooth quartic curve in $\PP^2$, then
$K_Y$ is trivial and the vertex is the unique irrational point on $Y$. Clearly
$Y$ does not have lc singularities, so if $X$ is the cone over $Y$,
then $X$ is a Fano variety of index one such that $\Nklt(X)$ has dimension one
and $X$ is not lc.
\end{example}

\section{Description of the non-klt locus}

Let $X$ be a normal quasi-projective variety, and let $\Delta$ be an effective boundary divisor on $X$
such that $K_X+\Delta$ is $\Q$-Cartier and the pair $(X, \Delta)$ is lc. Let $\Nklt(X, \Delta)$ be the non-klt locus. 
Recall that a subvariety $W \subset X$ is an lc centre
of the pair $(X, \Delta)$ if there exists a birational morphism $\holom{\mu}{X'}{X}$
and an effective divisor $E \subset X'$ of discrepancy $-1$ such that $\mu(E)=W$.
Recall also that the intersection $W_1 \cap W_2$ of two lc centres $W_i$ is a union
of lc centres \cite[Prop.1.5]{Kaw97}. In particular, given a point $x \in \Nklt(X, \Delta)$, there
exists a unique lc centre $W$ passing through $x$ that is minimal with respect to the inclusion.
By \cite[Thm.1.6]{Kaw97} the lc centre $W$ is normal in the point $x$. If $W$ is minimal (in every 
point $x \in W$), the Kawamata subadjunction formula holds \cite{Kaw98, FG12}:
there exists an effective $\Q$-divisor $\Delta_W$ on $W$ such that the pair $(W, \Delta_W)$
is klt and
\begin{equation} \label{subadjunction}
K_{W}+\Delta_{W} \sim_\Q  (K_X+\Delta)|_W.
\end{equation}
The following weak form of the subadjunction formula for non-minimal lc centres 
is well-known to experts, we nevertheless include a proof for lack of reference:

\begin{lemma} \label{lemmasubadjunction}
Let $(X, \Delta)$ be a projective lc pair, and let $W \subset X$ be an lc centre. 
Let $\holom{\nu}{W^n}{W}$ be the normalisation. Then there exists an effective divisor $\Delta_{W^n}$ on $W^n$
such that
$$
K_{W^n}+\Delta_{W^n} \sim_\Q  \nu^* (K_X+\Delta)|_W.
$$
Suppose that $Z \subset W$ is an lc centre such that $\dim Z=\dim W-1$.
Then we have a set-theoretical inclusion
$$
\fibre{\nu}{Z} \subset \Delta_{W^n}.
$$
\end{lemma}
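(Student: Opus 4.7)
My plan is to reduce the lemma to the case of a normal lc centre by passing to a dlt modification of $(X,\Delta)$, applying iterated adjunction there, and then pushing the resulting boundary divisor down to $W^n$ along a finite morphism. First I take a $\Q$-factorial dlt modification $\pi\colon(Y,\Delta_Y)\to(X,\Delta)$ (existence by \cite{BCHM10}, cf.\ \cite{Kol13}); then $K_Y+\Delta_Y\sim_\Q\pi^*(K_X+\Delta)$, and the lc centres of $(X,\Delta)$ are precisely the images under $\pi$ of the strata of $\lfloor\Delta_Y\rfloor$. Since $W$ is an lc centre, such a stratum maps onto $W$; picking one of minimal dimension and passing to an irreducible component produces $V\subset Y$ with $\dim V=\dim W$, $V$ normal (strata of dlt pairs are normal), and $\pi|_V\colon V\to W$ finite surjective. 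Normality of $V$ lets me factor $\pi|_V$ through the normalisation as $V\xrightarrow{p}W^n\xrightarrow{\nu}W$, with $p$ finite surjective of some degree $d$. Iterated adjunction along the components of $\lfloor\Delta_Y\rfloor$ cutting out $V$ then produces the \emph{different}, an effective $\Q$-divisor $\Delta_V\geq 0$ on $V$ with
$$K_V+\Delta_V\sim_\Q(K_Y+\Delta_Y)|_V\sim_\Q p^*\nu^*\bigl((K_X+\Delta)|_W\bigr).$$

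To descend from $V$ to $W^n$ I use the identity $p_*p^*D=d\,D$ for any $\Q$-Cartier divisor $D$ on $W^n$, together with the Hurwitz relation $K_V\sim p^*K_{W^n}+R_p$, where $R_p\geq 0$ is the ramification divisor (valid as Weil divisor classes, since $W^n$ is regular in codimension one and divisor classes are determined by their restriction to the regular locus). Pushing forward the equivalence above yields
$$d\,K_{W^n}+p_*(R_p+\Delta_V)\sim_\Q d\cdot\nu^*\bigl((K_X+\Delta)|_W\bigr),$$
so $\Delta_{W^n}:=\tfrac{1}{d}\,p_*(R_p+\Delta_V)\geq 0$ satisfies the first assertion.

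For the second assertion, let $Z\subset W$ be an lc centre with $\dim Z=\dim W-1$. After possibly replacing the dlt modification by a higher one (and re-choosing $V$), I can arrange a prime component $D_j$ of $\lfloor\Delta_Y\rfloor$, distinct from those cutting out $V$, such that $V\cap D_j$ is a codimension-one substratum of $V$ whose $\pi$-image is $Z$. Any such codimension-one stratum appears in the different $\Delta_V$ with coefficient at least $1$, so $V\cap D_j\subset\mathrm{supp}(\Delta_V)$, and consequently $p(V\cap D_j)\subset\mathrm{supp}(\Delta_{W^n})$ is a union of prime divisors of $W^n$ contained in $\nu^{-1}(Z)$. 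To reach \emph{every} prime component of $\nu^{-1}(Z)$, I observe that each such divisor of $W^n$ corresponds to a divisorial valuation of $k(X)$ centred on $Z$ and realising an lc place of $(X,\Delta)$; any finite set of such lc places can be extracted in a suitably refined dlt model, so the above argument applies to each, giving the claimed set-theoretic inclusion $\nu^{-1}(Z)\subset\mathrm{supp}(\Delta_{W^n})$.

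The trickiest point I expect is this last matching: one must verify that every branch of $W$ along $Z$ actually corresponds to an lc place of $(X,\Delta)$ centred on $Z$, and that all of these can be realised simultaneously as codimension-one dlt strata of $V$ in a single dlt model. The remaining ingredients—existence of dlt modifications, normality of dlt strata, iterated adjunction yielding an effective different, and the finite pushforward / Hurwitz formalism on normal varieties—are all standard.
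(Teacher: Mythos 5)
Your argument breaks at the very first reduction: you claim that on a $\Q$-factorial dlt modification $\pi\colon(Y,\Delta_Y)\to(X,\Delta)$ one can always find a stratum $V$ of $\lfloor\Delta_Y\rfloor$ dominating $W$ with $\dim V=\dim W$, i.e.\ finite over $W$. This is false in general. Take $X$ to be the projective cone over an elliptic curve $E$ (polarised by a very ample line bundle) and $\Delta=0$. Then $K_X$ is Cartier, $X$ is lc, and the vertex $p$ is the unique lc centre. Blowing up $p$ is a dlt modification whose reduced boundary is the single exceptional divisor $E_0\cong E$; this is the only stratum, it dominates $\{p\}$, and it has dimension $1>0=\dim W$. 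No other dlt model improves this, since the birational type of the minimal stratum dominating $W$ (Koll\'ar's \emph{source} of the lc centre) is a birational invariant, and here it is an elliptic curve, not a point. So the finite morphism $p\colon V\to W^n$ you use, and the Hurwitz-pushforward formula $\Delta_{W^n}:=\tfrac1d\,p_*(R_p+\Delta_V)$, simply do not exist.

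This failure is precisely what forces the paper's more elaborate route. The paper takes the minimal stratum $S$ dominating $W$ -- allowing $\dim S>\dim W$ -- so that $\mu_S\colon S\to W$ and its Stein factorisation $S\to W'\to W^n$ give a \emph{fibration}, not a finite map. It then recognises $S\to W'$ as an lc-trivial fibration and invokes Ambro's theorems (nefness and goodness of the moduli b-divisor, hence nonnegative Kodaira dimension) to produce an effective divisor on $W'$, followed by \cite[Lemma 1.1]{FG12} to descend to $W^n$; none of this is captured by a ramification-divisor computation. For the second assertion, the paper's use of \cite[Cor.11]{Kol11} shows that the components of the preimage of $Z$ in $W'$ are lc centres and hence lie in the discriminant divisor; your sketch (a codimension-one substratum $V\cap D_j$ over $Z$ appearing in the different) has the right flavour in the finite case, but again presupposes the nonexistent finite $V$. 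You correctly flagged the second assertion as delicate, but the irreparable gap is already in the first paragraph.
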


\begin{proof}
Let $\holom{\mu}{(X^m, \Delta^m)}{(X, \Delta)}$ be a dlt-model, i.e. $\mu$ is birational, the pair $(X^m, \Delta^m)$ is dlt and
$K_{X^m}+\Delta^m \sim_\Q \mu^*(K_X+\Delta)$ \cite[Thm.3.1]{KK10}. Let $S \subset X^m$  
be an lc centre of $(X^m, \Delta^m)$ that dominates $W$ and that is minimal with respect to the inclusion. 
By \cite[Thm.1]{Kol11} there exists an effective divisor $\Delta_S$ such that $(S, \Delta_S)$ is dlt and
$K_S+\Delta_S \sim_\Q \mu_S^* (K_X+\Delta)|_W$, where $\holom{\mu_S}{S}{W}$ is the restriction
of $\mu$ to $S$. Moreover $(S, \Delta_S)$ is klt on the generic fibre of $\mu_S$.
The variety $S$ being normal, the morphism $\mu_S$ factors through the normalisation $\nu$, and we denote by
$\holom{\mu_S^n}{S}{W'}$ and $\holom{\tau}{W'}{W^n}$ the Stein factorisation.

Moreover,
\begin{equation}\label{firstformula}
K_S+\Delta_S \sim_\Q (\mu^n_S)^* \circ \tau^* \circ \nu^* (K_X+\Delta)|_W
\end{equation}
implies that $\mu_S^n$ is an lc-trivial fibration in the sense of \cite[Defn.2.1]{Amb04}
and we denote by $\Delta_{W'}$ the discriminant divisor. Up to replacing $\mu_S^n$
by a birationally equivalent fibration we know by inversion of adjunction \cite[Thm.3.1]{Amb04}
that the pair $(W', \Delta_{W'})$ is lc. 

Using the terminology of \cite{Amb05} (see in particular Definition 3.2 and Theorem 3.3)  the moduli b-divisor 
of the lc-trivial fibration is $b$-nef and good, in particular
it has non-negative Kodaira dimension. Thus there exists an effective divisor $E$ such that  
$$
K_S+\Delta_S \sim_\Q (\mu_S^n)^* (K_{W'} + \Delta_{W'} + E).
$$
By the proof of \cite[Lemma 1.1]{FG12} there exists an effective divisor $\Delta_{W^n}$ such that
$$
K_{W'} + \Delta_{W'} + E \sim_\Q \tau^* (K_{W^n}+\Delta_{W^n}).
$$ 
Recalling \eqref{firstformula} we derive
$$
K_{W^n}+\Delta_{W^n} \sim_\Q \nu^* (K_X+\Delta)|_W.
$$

Suppose now that $Z \subset W$ is an lc centre such that $\dim Z=\dim W-1$.
Then we know by \cite[Cor.11]{Kol11} that 
every irreducible component of 
$\fibre{(\nu \circ \tau)}{Z}$ is an lc centre of the pair $(W', \Delta_{W'})$. Since $\fibre{(\nu \circ \tau)}{Z}$ is a divisor
in $W'$ we have a set-theoretical inclusion
\begin{equation} \label{inclusionupstairs}
\fibre{(\nu \circ \tau)}{Z} \subset \Delta_{W'}.
\end{equation}
Since the pair $(W^n, \Delta_{W^n})$ is not klt in the points where the pair $(W', \Delta_{W'}+E)$ is not klt (cf. \cite[Prop.20.3]{Uta92}), 
the inclusion \eqref{inclusionupstairs} implies a set-theoretical inclusion
$\fibre{\nu}{Z} \subset \Delta_{W^n}$.
\end{proof}

For the description of the non-klt locus
we start with a refinement of the local part of Theorem \ref{theorembasicestimate}
in the log-canonical case:

\begin{lemma} \label{lemmaestimate}
Let $(X, \Delta)$ be a projective lc pair such that $-(K_X+\Delta) \sim_\Q k H$, with $H$ an ample Cartier divisor
and $k \geq 1$.
Let $W \subset X$ be an lc centre. Then we have
$$
\dim W \geq k-1,
$$
and equality holds if and only if $\lfloor k \rfloor = k$ and $(W,H|_W) \cong (\PP^{k-1},\sO_{\PP^{k-1}}(1))$.
\end{lemma}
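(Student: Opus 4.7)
The plan is to reduce to the case of a minimal lc centre via Kawamata subadjunction and then apply a Kobayashi--Ochiai type bound for klt Fano pairs.

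If $W$ is not minimal among lc centres of $(X,\Delta)$, iterating one finds a minimal lc centre $W_{0}\subsetneq W$. Since lc centres are irreducible, $\dim W>\dim W_{0}$, so once the lemma is established in the minimal case one obtains $\dim W>\dim W_{0}\geq k-1$, hence $\dim W\geq k$, which is strictly better than the claimed bound and excludes the equality case. Thus we may assume $W$ is itself a minimal lc centre (minimal at every point of $W$). Under this assumption \cite[Thm.\ 1.6]{Kaw97} ensures $W$ is normal, and the Kawamata subadjunction formula \eqref{subadjunction} produces an effective $\Q$-divisor $\Delta_{W}$ on $W$ such that $(W,\Delta_{W})$ is klt and
$$
-(K_{W}+\Delta_{W})\ \sim_\Q\ k\, H|_{W},
$$
with $H|_{W}$ an ample Cartier divisor on the normal projective variety $W$.

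The conclusion then reduces to a Kobayashi--Ochiai type inequality for klt Fano pairs: if $(V,B)$ is a projective klt pair of dimension $d$ and $L$ is an ample Cartier divisor with $-(K_{V}+B)\sim_\Q kL$ for some $k>0$, then $k\leq d+1$, with equality forcing $B=0$ and $(V,L)\cong(\PP^{d},\sO_{\PP^{d}}(1))$. Applied to $(W,\Delta_{W})$ with $L=H|_{W}$, this yields $\dim W\geq k-1$; in the equality case $k=\dim W+1$ is a positive integer, so $\lfloor k\rfloor=k$ and $(W,H|_{W})\cong(\PP^{k-1},\sO_{\PP^{k-1}}(1))$, as required.

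The main obstacle is the Kobayashi--Ochiai step, which is classical for smooth Fano varieties but requires care in the klt setting. The inequality $k\leq d+1$ can be derived by Mori's bend-and-break on a resolution of $W$: since $-(K_{W}+\Delta_{W})$ is ample, $W$ is uniruled, and through a general point one finds a rational curve $C$ with $-K_{W}\cdot C\leq d+1$ that avoids both the singular locus of $W$ and the support of $\Delta_{W}$; then $k\,(H|_{W}\cdot C)\leq -K_{W}\cdot C\leq d+1$ combined with $H|_{W}\cdot C\geq 1$ yields the bound. The equality case amounts to recognising $\PP^{d}$ among klt Fano varieties of maximal anticanonical index admitting a Cartier polarisation of degree one on the extremal rational curves, via Cho--Miyaoka--Shepherd-Barron-type arguments adapted to the klt setting.
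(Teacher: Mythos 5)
Your overall strategy matches the paper's: reduce to a minimal lc centre, apply Kawamata subadjunction to obtain a klt log Fano pair $(W,\Delta_W)$ with $-(K_W+\Delta_W)\sim_\Q kH|_W$, and invoke a Kobayashi--Ochiai bound. The paper cites \cite[Thm.2.5]{AD12} for that bound, whereas you sketch a bend-and-break derivation; that sketch is broadly plausible but not airtight as written. In particular, on a resolution $\pi\colon\widetilde W\to W$ the comparison $-K_W\cdot C$ versus $-K_{\widetilde W}\cdot\widetilde C$ involves discrepancy terms whose signs can be positive for klt (non-canonical) $W$, so the inequality $-K_W\cdot C\le \dim W+1$ does not drop out automatically from the smooth bend-and-break bound; and the assertion that the curve can be chosen to avoid both the singular locus and $\operatorname{Supp}\Delta_W$ needs justification. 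These are exactly the issues the citation is there to take care of.

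The genuine gap is the case $\dim W=0$, which your argument never addresses. When $W$ is a point, there are no rational curves to bend and the Kobayashi--Ochiai inequality as you state it (``$k\le d+1$'' for $d=0$) is simply false: subadjunction on a point says $0\sim_\Q 0$ and gives no constraint on $k$, so isolated minimal lc centres are not ruled out by anything you wrote, and for $k>1$ they would contradict the lemma. The paper handles this separately: since $-H-(K_X+\Delta)\sim_\Q(k-1)H$ is ample for $k>1$, the restriction $H^0(X,\sO_X(-H))\to H^0(W,\sO_W(-H))$ is surjective by \cite[Thm.2.2]{Fuj11}, which is impossible when $W$ is a point because the left side vanishes while the right side is $\C$. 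You need this (or an equivalent) extra argument; without it your reduction step ``$\dim W>\dim W_0\geq k-1$'' also fails, since $W_0$ could a priori be $0$-dimensional.
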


\begin{proof}
The statement is trivial if $k=1$, so suppose $k>1$.
It is sufficient to prove the statement for $W \subset X$ a minimal centre. 
By the subadjunction formula \eqref{subadjunction} there exists a divisor $\Delta_W$ on $W$ such that $(W, \Delta_W)$ is klt
and 
$$
K_W + \Delta_W \sim_\Q (K_X+\Delta)|_W \sim_\Q -k H|_W.
$$
If $\dim W>0$ we can apply \cite[Thm.2.5]{AD12} 
to see that the log Fano variety $(W, \Delta_{W})$ has dimension at least $k-1$ and
equality holds if and only if $(W,H|_W) \cong (\PP^{k-1},\sO_{\PP^{k-1}}(1))$.
Thus we are left to exclude the possibility that $\dim W=0$:
since $-H-(K_X+\Delta) \sim_\Q (k-1) H$ is ample, the restriction map
$$
H^0(X, \mathcal O_X(-H)) \to H^0(W, \mathcal O_W(-H))
$$
is surjective by \cite[Thm.2.2]{Fuj11}. If $W$ is a point, the space $H^0(W, \mathcal O_W(-H))$ is not zero,
which is impossible since the antiample divisor $-H$ has no global sections on $X$.
\end{proof}

The following proposition is the key step in the proof of Theorem \ref{theoremnkltlocus}:

\begin{proposition} \label{propositioncentrek}
Let $(X, \Delta)$ be a projective lc pair of dimension $n \geq 3$ such that $-(K_X+\Delta) \sim_\Q k H$, with $H$ an ample Cartier divisor
and $k \in \N$. Let $W$ be an lc centre of $(X, \Delta)$ of dimension $k$.

If $W$ is minimal, then $(W,H|_W) \cong (\PP^{k},\sO_{\PP^{k}}(1))$ or 
$(W,H|_W) \cong (Q^{k},\sO_Q(1))$, with $Q^k \subset \PP^{k+1}$ an integral quadric.

If $W$ is not minimal, then $(W,H|_W) \cong (\PP^{k},\sO_{\PP^{k}}(1))$
and $W$ contains exactly one lc centre $Z \subsetneq W$.
\end{proposition}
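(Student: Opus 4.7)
My approach is to treat the minimal and the non-minimal cases separately, in both cases using subadjunction to put a log-canonical pair structure on $W$ and then invoking a log-Kobayashi--Ochiai type classification.

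\emph{Minimal case.} Kawamata's subadjunction \eqref{subadjunction} produces an effective $\Q$-divisor $\Delta_W$ on $W$ with $(W, \Delta_W)$ klt and $-(K_W + \Delta_W) \sim_\Q k\,H|_W$. Since $\dim W = k$ and $H|_W$ is ample Cartier, this is the ``$d = \dim W$'' case of the log-Kobayashi--Ochiai classification---whereas Lemma \ref{lemmaestimate} used the stronger ``$d = \dim W + 1$'' case via \cite[Thm.2.5]{AD12}---and admits precisely two alternatives, $(W, H|_W) \cong (\PP^k, \sO_{\PP^k}(1))$ or $(Q^k, \sO_{Q^k}(1))$ with $Q^k \subset \PP^{k+1}$ an integral quadric.

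\emph{Non-minimal case.} Fix an lc centre $Z \subsetneq W$ of $(X, \Delta)$. By Lemma \ref{lemmaestimate} we have $\dim Z = k-1$ and $(Z, H|_Z) \cong (\PP^{k-1}, \sO(1))$. Lemma \ref{lemmasubadjunction} applied to $W$ produces the normalisation $\nu\colon W^n \to W$ and an effective $\Q$-divisor $\Delta_{W^n}$ with $K_{W^n} + \Delta_{W^n} \sim_\Q -k\,\nu^*H|_W$ and $\nu^{-1}(Z) \subset \Delta_{W^n}$, so $\Delta_{W^n} \neq 0$. Inspection of the proof of Lemma \ref{lemmasubadjunction} shows additionally that $(W^n, \Delta_{W^n})$ is log-canonical and that each component of $\nu^{-1}(Z)$ appears with coefficient one; such a component $D_1$ is therefore an lc centre of $(W^n, \Delta_{W^n})$, and applying Lemma \ref{lemmaestimate} to this pair gives $(D_1, H|_{D_1}) \cong (\PP^{k-1}, \sO(1))$.

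The crux is upgrading this to $W^n \cong \PP^k$. Write $\Delta_{W^n} = D_1 + \Delta'$ and restrict $-K_{W^n} \sim_\Q k\,H|_W + D_1 + \Delta'$ to $D_1 \cong \PP^{k-1}$; combined with the adjunction identity $K_{D_1} \sim -k\,H|_{D_1}$ this forces $\Delta'|_{D_1} = 0$. This extra rigidity, together with the ``boundary nonempty'' variant of the log-Kobayashi--Ochiai classification applied to the lc pair $(W^n, \Delta_{W^n})$, rules out the quadric alternative and yields $(W^n, H|_W) \cong (\PP^k, \sO(1))$. As $\PP^k$ is smooth and $\nu$ is birational, $\nu$ is an isomorphism, so $W \cong \PP^k$. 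Uniqueness of $Z$ then follows formally: any two distinct strictly smaller lc centres would be hyperplanes in $\PP^k$ whose intersection is, by \cite[Prop.1.5]{Kaw97}, again a union of lc centres of $(X, \Delta)$, of dimension $k-2$ and thus in contradiction with Lemma \ref{lemmaestimate}.

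The principal obstacle is the rigidity step in the non-minimal case: \cite[Thm.2.5]{AD12} is formulated for klt pairs, so excluding the quadric when $\Delta_{W^n} \neq 0$ requires either an lc extension of that classification or a direct Mori-theoretic argument on $W^n$ exploiting the vanishing $\Delta'|_{D_1} = 0$, in the spirit of the proof of Theorem \ref{theorembasicestimate}.
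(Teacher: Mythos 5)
Your proposal reproduces the paper's overall strategy (Kawamata subadjunction in the minimal case, Lemma~\ref{lemmasubadjunction} in the non-minimal case, then a high-index classification), but there are three concrete problems in the non-minimal case.

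First, the step ``As $\PP^k$ is smooth and $\nu$ is birational, $\nu$ is an isomorphism'' is false: a finite birational map onto a non-normal variety with smooth normalisation (e.g.\ $\PP^1 \to$ nodal cubic) is not an isomorphism. The paper must work for this: it shows $\sO_{\Delta_{W^n}}(\nu^* H) \cong \sO_{\PP^{k-1}}(1)$ makes the restriction $\nu|_{\nu^{-1}(Z)} \colon \PP^{k-1} \to Z \cong \PP^{k-1}$ a degree-one finite map, hence $\nu$ is injective on points, and then invokes Ambro's theorem \cite[Thm.1.1]{Amb11} that lc centres are semi-normal to upgrade injectivity to an isomorphism. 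Without a semi-normality input of this kind the conclusion $W \cong \PP^k$ does not follow.

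Second, your worry about \cite[Thm.2.5]{AD12} being ``formulated for klt pairs'' is the opposite of what causes your difficulty: the theorem in question already handles lc pairs with nonzero boundary, and the paper applies it directly to $(W^n, \Delta_{W^n})$ with $\Delta_{W^n} \neq 0$ to rule out the quadric in one stroke. The restriction-to-$D_1$ argument and the ``boundary nonempty variant'' you sketch are not needed, and as you yourself note you do not have a complete argument there. Third, your uniqueness argument breaks down when $k=1$: two distinct zero-dimensional lc centres on a curve have empty intersection, so Lemma~\ref{lemmaestimate} gives no contradiction. The paper instead derives $\nu^{-1}(Z) = \Delta_{W^n}$ from the degree computation $\sO_{W^n}(\Delta_{W^n}) \cong \sO_{\PP^k}(1)$ and deduces uniqueness from the fact that $\Delta_{W^n}$ is a single hyperplane containing $\nu^{-1}(Z')$ for every lc centre $Z' \subsetneq W$; this works uniformly in $k$, including $k=1$, which is precisely the case needed for Theorem~\ref{theoremkone}.
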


\begin{proof}
If $W$ is minimal we know by the subadjunction formula \eqref{subadjunction}
that there exists an effective divisor $\Delta_{W}$ on $W$
such that  $(W, \Delta_{W})$ is log Fano of index $k$ and dimension $k$.
The statement then follows from \cite[Thm.2.5]{AD12}.

Suppose now that $W$ is not minimal. Then $W$ contains another lc centre $Z \subsetneq W$
and by Lemma \ref{lemmaestimate} we know that $Z$ has dimension $k-1$.
Denote by $\holom{\nu}{W^n}{W}$ the normalisation.
By Lemma \ref{lemmasubadjunction} 
there exists an effective divisor $\Delta_{W^n}$
such that
\begin{equation} \label{help1}
K_{W^n}+\Delta_{W^n} \sim_\Q  \nu^* (K_X+\Delta)|_W \sim_\Q -k \nu^* H|_W.
\end{equation}
Thus the pair $(W^n, \Delta_{W^n})$ is log Fano of dimension $k$ and index $k$,
moreover by the last part of Lemma \ref{lemmasubadjunction} 
we have a set-theoretical inclusion
$$
\fibre{\nu}{Z} \subset \Delta_{W^n}.
$$
In particular $\Delta_{W^n}$ is not empty, so by
\cite[Thm.2.5]{AD12} we obtain
$W^n \cong \PP^{k}$ and $\sO_{W^n}(\nu^* H) \simeq \sO_{\PP^{k}}(1)$.
By \eqref{help1} this implies $\sO_{W^n}(\Delta_{W^n}) \simeq \sO_{\PP^{k}}(1)$,
so we see that $\fibre{\nu}{Z}=\Delta_{W^n}$ and $\fibre{\nu}{Z}$ is a hyperplane. 
Note that this already implies that $Z$ is unique.

Therefore it remains to prove that $W$ is normal. Note
first that $W$ is minimal, hence normal, in the complement of $Z$. Thus it  is sufficient to prove that $W$ is normal 
in every point $x \in Z$. 
By Lemma \ref{lemmaestimate} we have $Z \cong \PP^{k-1}$, so we get 
a finite map
$$
\holom{\nu|_{\fibre{\nu}{Z}}}{\fibre{\nu}{Z} \cong \PP^{k-1}}{Z \cong \PP^{k-1}}.
$$
Since $\sO_{\fibre{\nu}{Z}}(\nu^* H) \simeq  \sO_{\PP^{k-1}}(1)$ this map has degree one, so it is an isomorphism. This proves that the normalisation $\nu$ is an injection on points.
By a result of Ambro \cite[Thm.1.1]{Amb11} we know that $W$ is semi-normal, so $\nu$ is an isomorphism.
\end{proof}

\begin{proof}[Proof of Theorem \ref{theoremnkltlocus}]
By Nadel's vanishing theorem (see e.g. \cite[Thm.3.2]{Fuj11}) we have $H^1(X, \sI_{\Nklt(X)})=0$, so 
the map
$$
H^0(X, \sO_X) \rightarrow H^0(\Nklt(X), \sO_{\Nklt(X)})
$$  
is surjective. In particular the non-klt locus $\Nklt(X)$ is connected.
If $\Nklt(X)$ is irreducible we conclude by Proposition \ref{propositioncentrek}.

Suppose from now on that $\Nklt(X)$ is reducible. If $W_1$ (resp. $W_2$) is an irreducible component
of $\Nklt(X)$, hence an lc centre, of dimension $r_1$ (resp. $r_2$), the intersection
$W_1 \cap W_2$ is either empty or a union of lc centres of dimension at most 
$\min(r_1, r_2)-1$. By the connectedness of $\Nklt(X)$  we can reduce to consider the  second case.  Then Lemma \ref{lemmaestimate}  implies that every irreducible component
of $\Nklt(X)$ has dimension $k$; moreover two components meet along a set of dimension $k-1$.
By Lemma \ref{propositioncentrek} every irreducible component $W_i$ is isomorphic to $\PP^k$ 
and contains exactly one lc centre, so we see that $\Nklt(X)$ has exactly two irreducible components.
These two irreducible components meet along an lc centre of dimension $k-1$,
so by Lemma \ref{lemmaestimate} the intersection $W_1 \cap W_2$ is a linear $\PP^{k-1}$.
Thus $\Nklt(X)=W_1 \cup W_2$ is a reducible quadric of dimension $k$.
\end{proof}

\section{Description of the geometry of the Fano variety}
\label{sectiondescriptionX}

We start by classifying certain weak log Fano varieties that will appear as the fibres 
of the MRC-fibration:

\begin{proposition} \label{propositionindexk}
Let $X$ be a normal projective variety of dimension $k+1 \geq 2$ with  terminal singularities,
and let $\Delta$ be a non-zero effective Weil $\Q$-Cartier divisor on $X$ such that the pair $(X, \Delta)$ is lc,  and
$$
-(K_X+\Delta) \sim_\Q k H
$$
with $H$ a nef and big Cartier divisor on $X$. Suppose also that for every curve $C \subset X$
such that $H \cdot C=0$ we have $K_X \cdot C \geq 0$. Then $(X, \sO_X(H))$ is one of the following quasi-polarised varieties:
\begin{enumerate}
\item $(\mathbb P^{k+1}, \mathcal O_{\mathbb P^{k+1}}(1))$ and $\Delta$ is a quadric; or
\item $(Q^{k+1}, \mathcal O_{Q^{k+1}}(1))$ and $\Delta$ is a quadric; or
\item a generalised cone of dimension $k+1$ over the Veronese surface $(\mathbb P^2, \mathcal O_{\mathbb P^2}(2))$ and $\Delta$
is the generalised cone of dimension $k$ over $(\mathbb P^1, \mathcal O_{\mathbb P^1}(2))$; or
\item a scroll $\PP(\sO_{\PP^1}(a) \oplus \sO_{\PP^1}(1) \oplus \sO_{\PP^1}^{\oplus k-1})$ where $a \in \N_{>0}$
and $\Delta$ is the union of  $\PP(\sO_{\PP^1}(1) \oplus \sO_{\PP^1}^{\oplus k-1})$ and a $\PP^k$; or
\item a scroll $\PP(\sO_{\PP^1}(a) \oplus \sO_{\PP^1}(1)^{\oplus 2} \oplus \sO_{\PP^1}^{\oplus k-2})$ where $a \in \N_{>0}$ and $\Delta=\PP(\sO_{\PP^1}(1)^{\oplus 2} \oplus \sO_{\PP^1}^{\oplus k-2})$; or
\item a scroll $\PP(\sO_{\PP^1}(a) \oplus \sO_{\PP^1}(2) \oplus \sO_{\PP^1}^{\oplus k-1})$ where $a \in \N_{>0}$
and $\Delta=\PP(\sO_{\PP^1}(2) \oplus \sO_{\PP^1}^{\oplus k-1})$; or
\item a scroll $\PP(V)$ over an elliptic curve and $\Delta = \PP(W)$ where $V \rightarrow W$ 
is a quotient bundle of rank $k$ such that $\det W \simeq \mathcal O_W$.
\end{enumerate}
\end{proposition}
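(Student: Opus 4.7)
My plan is to mimic the strategy of the proof of Theorem \ref{theorembasicestimate}: use Lemma \ref{lemmacontraction} to produce a $K_X$-negative extremal ray on which both $H$ and $\Delta$ are positive, analyse the associated Mori contraction via adjunction theory, and identify the pair $(X,\Delta)$ using the log-Fano classification \cite[Thm.2.5]{AD12}.

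Setting $N=kH$ and $E=\Delta$ in Lemma \ref{lemmacontraction}, which applies since terminal singularities are klt and the hypothesis on $H$-trivial curves is exactly the one required, I obtain a $K_X$-negative extremal ray $R$ with $H\cdot R>0$ and $\Delta\cdot R>0$. Denote by $\varphi\colon X\to Y$ the associated contraction and by $r=r(\varphi,H)$ its nefvalue. The identity $(kH+\Delta)\cdot R=-K_X\cdot R=rH\cdot R$ immediately yields $r>k$. I rule out the birational case as in the proof of Theorem \ref{theorembasicestimate}: a nontrivial fibre $F$ of a birational $\varphi$ would satisfy $\dim F\le k$ and simultaneously $\dim F\ge\lfloor r\rfloor\ge k$ by \cite[Thm.2.1(II,i)]{And95}, so $\dim F=k$. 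As $X$ is terminal one may choose $F$ meeting the Gorenstein locus and apply Lemma \ref{lemmarounddown} to force $r=k$, contradicting $r>k$. Thus $\varphi$ is of fibre type. Andreatta's estimate $\dim F\ge r-1>k-1$ combined with $\dim X=k+1$ leaves only two sub-cases: $\dim Y=0$ (so $\rho(X)=1$ and $H$ is ample) or $Y$ is a smooth curve with general fibre of dimension $k$.

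When $\rho(X)=1$, the pair $(X,\Delta)$ is log Fano of dimension $k+1$ and coindex one with a nonzero boundary, and \cite[Thm.2.5]{AD12} identifies $(X,\sO_X(H))$ with one of cases (1), (2) or (3); the corresponding $\Delta$ is obtained from $\Delta\sim_\Q -K_X-kH$ together with the explicit polarisation in each case. When $Y$ is a curve, the general fibre $F$ is terminal, and since $\Delta$ is horizontal the pair $(F,\Delta|_F)$ is log Fano of dimension $k$ and index $k$ with $\Delta|_F$ effective and nonzero; thus $(F,\sO_F(H))\cong(\PP^k,\sO_{\PP^k}(1))$ by \cite[Thm.2.5]{AD12}. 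A standard relative extension of this classification (Fujita, Ando, Andreatta--Wi{\'s}niewski) upgrades $\varphi$ to a projective bundle $X=\PP_Y(V)$ with $H=\sO_{\PP(V)}(1)$ for some rank-$(k+1)$ vector bundle $V$ on $Y$.

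It remains to classify the triples $(Y,V,\Delta)$. From the relative adjunction formula one has
$$\Delta\sim_\Q H-\varphi^*(K_Y+\det V),$$
so effectivity of $\Delta$ is equivalent to $H^0(Y,V\otimes\omega_Y^{-1}\otimes(\det V)^{-1})\ne 0$. A degree count combined with the nefness and bigness of $V$ and the log-canonicity of $(X,\Delta)$ rules out $g(Y)\ge 2$. For $Y=\PP^1$ I split $V=\bigoplus\sO_{\PP^1}(a_i)$ with $0\le a_1\le\dots\le a_{k+1}$, describe the possible effective representatives of $\Delta$ in each splitting type, and enforce log-canonicity; this pins down the configurations listed in (4), (5), (6). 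For $Y$ an elliptic curve, $\omega_Y$ is trivial and the effectivity condition reduces to the existence of a rank-$k$ quotient $V\to W$ with $\det W\simeq\sO_Y$, giving case (7). The main obstacle I foresee is precisely this final step: ruling out higher genus, enumerating the compatible splittings on $\PP^1$, and in each case identifying the explicit effective representative of $\Delta$ while respecting log-canonicity --- this requires a careful interplay between the positivity of $V$, the effectivity condition on $\Delta$, and the restriction on how $\Delta$ may meet the minimal section of the scroll.
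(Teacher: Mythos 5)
Your proposal follows essentially the same overall strategy as the paper: produce the extremal ray via Lemma \ref{lemmacontraction}, observe $r>k$, exclude the birational contraction with \cite[Thm.2.1(II,i)]{And95} and Lemma \ref{lemmarounddown}, and reduce to a fibre-type contraction over a point or a curve. The paper then cites \cite[Sect.7.2, 7.3]{BS95} once to obtain the list $\{\PP^{k+1},\,Q^{k+1},\,\text{cone over Veronese},\,\text{scroll over a curve}\}$, whereas you split into $\dim Y=0$ and $\dim Y=1$ and appeal to \cite[Thm.2.5]{AD12}; these are interchangeable. The place where the paper's treatment is genuinely tighter than what you sketch is the scroll case. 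Rather than computing with $\Delta\sim_\Q H-\varphi^*(K_Y+\det V)$ on the total space, the paper isolates the unique horizontal component $\Delta_1$ of $\Delta$ (it must be $\PP(W)$ for a rank-$k$ quotient $V\twoheadrightarrow W$ since $\Delta$ cuts a hyperplane on the general fibre), sets $\Delta'=\Delta-\Delta_1$, and uses adjunction together with the canonical bundle formula on $\Delta_1$ to get the clean identity $-(\Delta_1\cap\Delta')=\varphi|_{\Delta_1}^*(K_C+\det W)$. As $W$ is a quotient of the nef bundle $V$, $\det W$ is nef and this immediately yields $K_C$ antinef, hence $g(C)\le 1$; for $g(C)=1$ it forces $\Delta'=0$ and $\det W\simeq\sO_C$, which is exactly case (7). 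Your proposed ``degree count'' for excluding $g\ge 2$ is weaker than you make it sound: $\deg\big(V\otimes\omega_Y^{-1}\otimes(\det V)^{-1}\big)<0$ does not by itself preclude a section, and you would additionally need that a line subbundle of a nef bundle on a curve has degree at most $\deg V$ --- the paper's route through $\det W$ avoids this. Finally, you correctly flag that the enumeration of admissible splitting types on $\PP^1$ (constrained by effectivity of $\Delta$, log-canonicity, and the hypothesis on $H$-trivial curves) remains to be carried out; the paper is equally terse at that point and simply asserts the resulting list (4)--(6).
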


\begin{proof}
By Lemma \ref{lemmacontraction} there exists a $K_X$-negative extremal ray $R$ 
 such that $\Delta \cdot R>0$ and $H \cdot R>0$. Thus if \holom{\varphi}{X}{Y} denotes the contraction of this extremal ray,
the nefvalue $r:= r(\varphi, H)$ (cf. Definition \ref{definitionnefvalue}) is strictly larger than $k$.
Arguing by contradiction 
we suppose that the extremal contraction $\varphi$ is birational. 
Let $F$ be a non-trivial $\varphi$-fibre, then by \cite[Thm.2.1(II,i)]{And95} we have $\dim F \geq \lfloor r \rfloor \geq k= \dim X-1$. Thus $\varphi$ contracts the divisor $F$ onto a point, in particular $F$ meets the Gorenstein locus
of $X$. By Lemma \ref{lemmarounddown} this implies that $r = \lfloor r \rfloor= k$, a contradiction.
Thus $\varphi$ is of fibre type and we can apply \cite[Sect.7.2, 7.3]{BS95} to see that
$X$ is isomorphic to one of the following varieties:
\begin{enumerate}
\item $(\mathbb P^{k+1}, \mathcal O_{\mathbb P^{k+1}}(1))$; or
\item $(Q^{k+1}, \mathcal O_{Q^{k+1}}(1))$; or
\item a generalised cone over $(\mathbb P^2, \mathcal O_{\mathbb P^2}(2))$; or
\item a scroll over a curve $C$.
\end{enumerate}
In the cases a)-c) we are obviously finished, so suppose that $X \cong \PP(V)$
with $V$ a vector bundle of rank $k+1$. Note that $\Delta$ has exactly one irreducible component $\Delta_1$ that surjects onto $C$. Since $\Delta_1 \rightarrow C$ is flat and every fibre is a hyperplane in $\PP^k$, we see that
$\Delta_1 \cong \PP(W)$ with $V \rightarrow W$ a quotient bundle of rank $k$. Set $\Delta':=\Delta-\Delta_1$. 
By the adjunction formula 
$$
-(K_{\Delta_1}+(\Delta_1 \cap \Delta')) \simeq - k H|_{\Delta_1}
$$
and by the canonical bundle formula
$$
K_{\Delta_1} = \varphi|_{\Delta_1}^*(K_C+\det W) - k H|_{\Delta_1}
$$
thus we obtain $-(\Delta_1 \cap \Delta')= \varphi|_{\Delta_1}^*(K_C+\det W)$. Since $\det W$ is nef, this implies
that $K_C$ is antinef. If $C$ is a rational curve we obtain the cases d)-f).
If $C$ is an elliptic curve, the divisors $-(\Delta_1 \cap \Delta')$ and $\det W$ must be trivial.
Thus we obtain the case g).
\end{proof}

\begin{proposition} \label{propositionMRC}
Let $X$ be a Fano variety of dimension $n$ and index $k$ with lc singularities such that $-K_X \simeq kH$, with $H$ an ample Cartier divisor on $X$.
Assume that $\dim \Nklt(X) = k$.
Let \holom{\mu}{X'}{X} be a terminal modification of $X$, and write
$$
K_{X'} \simeq \mu^* K_X - E,
$$ 
where $E$ is an effective, reduced, $\mu$-exceptional Weil divisor.
Suppose that the base of the MRC-fibration $X' \dashrightarrow Z$ has dimension $n-k-1$, and let $F$ be a general fibre. 
Then the log-pair $(F, F \cap E)$ quasi-polarised by the nef and big divisor $(\mu^* H)|_F$ 
is isomorphic to one of the varieties {\rm a)-f)} in Proposition \ref{propositionindexk}.
Moreover this statement is effective, i.e. there exist examples realising all these cases.
\end{proposition}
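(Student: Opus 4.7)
The plan is to apply Proposition \ref{propositionindexk} to the quasi-polarised pair $(F, F\cap E)$ with polarisation $(\mu^*H)|_F$, and then eliminate case g) by rational-connectedness of a general MRC-fibre. First, I would replace $X'$ by a resolution on which the MRC-fibration becomes a morphism $g\colon \widetilde{X'} \to Z$; a general fibre $\widetilde{F}$ is then smooth, rationally connected, and of dimension $n-\dim Z = k+1$. Since the resolution is an isomorphism outside a set of codimension at least two, for general $\widetilde{F}$ the natural map $\widetilde{F}\to F:=\pi(\widetilde{F})\subset X'$ is birational and $F$ is a normal projective variety with terminal singularities (inherited from $X'$, which is terminal along a general fibre).

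Next I would check the hypotheses of Proposition \ref{propositionindexk}. Adjunction gives $K_F + (F\cap E) \sim_\Q (K_{X'}+E)|_F = (\mu^*K_X)|_F \sim_\Q -k(\mu^*H)|_F$ as desired. That $(F, F\cap E)$ is lc follows from the fact that $(X', E)$ is lc (since $K_{X'}+E\sim_\Q \mu^*K_X$ is a crepant pullback of a Cartier divisor and $E$ is reduced) together with a Bertini argument for a general fibre. The divisor $F\cap E$ is non-zero: $E$ has dimension $n-1$, $F$ has dimension $k+1$, so $\dim E+\dim F = n+k \geq n+1$ and they intersect; equivalently, $\mu(F)$ is rationally chain-connected of dimension $k+1$ and must meet $\Nklt(X)$ by Broustet--Pacienza applied to $X$. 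The divisor $(\mu^*H)|_F$ is nef Cartier, and is big because $\mu|_F$ is birational onto its $(k+1)$-dimensional image in $X$ on which $H$ is ample. Finally, if $C\subset F$ satisfies $(\mu^*H)|_F \cdot C = 0$, then $C$ is $\mu$-contracted, and $\mu$-nefness of $K_{X'}$ (from Definition \ref{definitionterminal}) combined with $K_F \simeq K_{X'}|_F$ (the normal bundle of a general fibre being trivial) gives $K_F\cdot C \geq 0$.

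With the hypotheses verified, Proposition \ref{propositionindexk} places $(F,F\cap E,(\mu^*H)|_F)$ in one of the classes a)--g). Case g) is excluded because a scroll $\PP(V)$ over an elliptic curve is not rationally connected, whereas a general MRC-fibre is. This yields the stated classification a)--f). For effectiveness I would point to the constructions in Section \ref{sectionexamples}, where each case is exhibited by an explicit cone-type or scroll-type example.

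The main obstacle is the control of singularities under restriction to a general MRC-fibre: the MRC-fibration is only a rational map, so one has to pass to a resolution and then descend back to $X'$ while verifying that $F$ is still terminal and that $(F, F\cap E)$ is lc with $F\cap E$ non-zero reduced. The remaining steps (adjunction, bigness of $(\mu^*H)|_F$, the $\mu$-nef inequality for $K_F$) are formal once this is in place, and the exclusion of case g) is immediate from the defining property of the MRC-fibration.
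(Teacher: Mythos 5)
Your proposal follows essentially the same route as the paper: verify the hypotheses of Proposition~\ref{propositionindexk} for the quasi-polarised pair $(F, F\cap E, (\mu^*H)|_F)$, with the adjunction relation $K_F+E|_F\sim_\Q -k(\mu^*H)|_F$, the $\mu$-nefness of $K_{X'}$ to control curves contracted by $\mu^*H$, lc-ness of $(F,E|_F)$ inherited from $(X',E)$, non-vanishing of $E|_F$ via Broustet--Pacienza, and then exclusion of case~g) by rational connectedness of the general MRC-fibre, with effectiveness delegated to the examples. This is exactly the paper's argument, and your added care about resolving the indeterminacy of the MRC-fibration and descending back to $X'$ is a legitimate elaboration of what the paper treats implicitly (the MRC-fibration is almost holomorphic, so the general fibre sits inside the domain of definition and inherits terminal singularities by Bertini).

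One point deserves a correction: the dimension count ``$\dim E+\dim F = n+k > n$ hence they intersect'' is not valid in an arbitrary projective variety. A divisor and a subvariety with complementary-plus dimensions need not meet: e.g.\ in $\PP^1\times\PP^2$ the two fibres $\{p\}\times\PP^2$ and $\{q\}\times\PP^2$ have dimensions summing to $4>3$ yet are disjoint, and in your situation $E$ could a priori fail to dominate $Z$. The correct justification is the one you give afterwards (and the one the paper uses): if $E\cap F=\emptyset$ for general $F$, then $X$ would fail to be rationally chain-connected modulo $\Nklt(X)$, contradicting \cite[Thm.1.2]{BP11}. So drop the dimension count and keep only the Broustet--Pacienza argument.
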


\begin{proof}
 Since $X$ has lc singularities
and $K_{X'}+E \sim_\Q \mu^* K_X$, the pair $(X', E)$ is lc. By hypothesis $\dim Z=n-k-1$,
so the general fibre $F$ is a $(k+1)$-dimensional variety with  terminal singularities. 
Moreover the pair
$(F, E|_F)$ is lc and $E|_F \neq 0$ since otherwise $X$
is not rationally connected modulo the non-klt locus, in contradiction to \cite[Thm.1.2]{BP11}.
By adjunction we have $K_F+E|_F \sim_\Q -k (\mu^* H)|_F$, moreover any curve $C \subset F$
such that $(\mu^* H)|_F \cdot C=0$ is $\mu$-exceptional, so $K_F \cdot C=K_{X'} \cdot C \geq 0$.
Thus Proposition \ref{propositionindexk} applies.  Case g) is excluded since $F$ is rationally connected.

The statement is effective, the examples corresponding to the varieties a)-f) are:
Examples \ref{exampledimk}, \ref{exampledimkquadric}, \ref{exampleveronese}, \ref{examplereduciblequadric}
and \ref{exampleprojectivebundles} (twice). 
\end{proof}

\begin{remark*}
An analogous statement of Proposition \ref{propositionMRC} should hold
if we replace a terminal modification by the canonical modification. However this would make 
it necessary to prove Proposition \ref{propositionindexk} and thus \cite[Sect.7.2, 7.3]{BS95} for varieties with canonical singularities,
a rather tedious exercise.
\end{remark*}

The following lemma is an analogue of classical descriptions of projective bundles
as in \cite[Prop.3.2.1]{BS95}:

\begin{lemma} \label{lemmaquadricbundle}
Let $Z$ be a projective manifold, and let $X$ be a normal projective variety
admitting an equidimensional fibration \holom{\varphi}{X}{Z} of relative dimension $k$. Assume that  
 there exists an ample Cartier divisor $H$ on $X$ such that the general
polarised fibre $(F, \sO_F(H))$ is isomorphic to the quadric $(Q^k, \sO_{Q^k}(1))$.
Then $X \rightarrow Z$ is a quadric fibration\footnote{We use the definition of an
adjunction theoretic quadric fibration \cite[3.3.1]{BS95} which is a-priori weaker
than supposing that all the fibres are quadrics.}, i.e. there exists a Cartier divisor $M$ on $Z$ such that
$K_X+k H \sim_\Q \varphi^* M$.
\end{lemma}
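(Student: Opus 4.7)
The plan is to first establish that $K_X+kH$ is $\varphi$-trivial on every fibre, then descend it to a $\mathbb{Q}$-Cartier divisor on $Z$ using the smoothness of $Z$ and the equidimensionality of $\varphi$. Throughout, since $H$ is Cartier, the conclusion $K_X+kH\sim_\Q\varphi^*M$ only makes sense if $K_X$ is $\Q$-Cartier, so we work under this implicit hypothesis (which holds in every application in this paper, where $X$ arises from a modification of a variety with $\Q$-Cartier canonical class).

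\textbf{Step 1 (triviality on the generic fibre and descent over an open subset).} Let $\eta\in Z$ be the generic point. By hypothesis $X_\eta\otimes_{k(\eta)}\overline{k(\eta)}\cong Q^k$ polarised by $\sO_{Q^k}(1)$, and for the smooth quadric one has $K_{Q^k}\simeq -k\,\sO_{Q^k}(1)$. Hence $(K_X+kH)|_{X_\eta}\sim 0$ in $\mathrm{Pic}(X_\eta)$. Shrinking to an open dense $U\subset Z$ over which $\varphi$ is flat with geometrically integral smooth quadric fibres (this open set exists by generic flatness/smoothness since the general fibre is smooth), the line bundle $\sO_X(m(K_X+kH))$, for a sufficiently divisible $m$, satisfies $h^0(F_z,\sO_{F_z}(m(K_X+kH)|_{F_z}))=1$ for all $z\in U$. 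Base change therefore produces a line bundle $M_U$ on $U$ together with an isomorphism $\sO_X(m(K_X+kH))|_{\varphi^{-1}(U)}\simeq\varphi^*M_U$; in terms of $\mathbb{Q}$-Cartier $\mathbb{Q}$-divisors this reads $(K_X+kH)|_{\varphi^{-1}(U)}\sim_\Q\varphi^*(\tfrac{1}{m}M_U)$.

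\textbf{Step 2 (extension to $Z$ and correction along the boundary).} Since $Z$ is smooth, the closure of any Weil divisor representing $\tfrac{1}{m}M_U$ is automatically $\Q$-Cartier; call this extension $\widetilde{M}$. Consider the $\Q$-Cartier $\Q$-divisor
\[
D:=(K_X+kH)-\varphi^*\widetilde{M}.
\]
By construction $D$ is trivial on $\varphi^{-1}(U)$, so the support of $D$ is contained in $\varphi^{-1}(Z\setminus U)$. Equidimensionality of $\varphi$ implies that every codimension-one irreducible component of $\varphi^{-1}(Z\setminus U)$ is of the form $\varphi^{-1}(D_i)$ for some prime divisor $D_i\subset Z$; any other component has codimension $\geq 2$ in $X$ and thus does not appear in the support of the $\Q$-Cartier divisor $D$ on the normal variety $X$. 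Therefore we may write $D\sim_\Q \sum a_i\varphi^*D_i$ for suitable $a_i\in\Q$, and $M:=\widetilde{M}+\sum a_iD_i$ is a $\Q$-Cartier (hence, after clearing denominators, Cartier up to $\Q$-linear equivalence) divisor on $Z$ satisfying $K_X+kH\sim_\Q\varphi^*M$.

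\textbf{Main obstacle.} The delicate point is Step 1: we need to pass from the purely generic statement ``$(K_X+kH)|_{X_\eta}\sim 0$'' to an actual isomorphism of $\Q$-line bundles over a Zariski open subset of $Z$. This requires flatness of $\varphi$ over a dense open and the constancy of $h^0$ on geometric fibres, which is where the hypothesis that the general fibre is the smooth rigid quadric $(Q^k,\sO_{Q^k}(1))$ (with $\mathrm{Pic}(Q^k)=\mathbb{Z}$ generated by the polarisation) is essential. Once this descent is secured, Step 2 is classical, and the equidimensionality hypothesis is exactly what is needed to guarantee that nothing escapes into codimension $\geq 2$ of $Z$.
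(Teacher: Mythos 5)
Your Step 1 is fine (and different in method from the paper, which instead restricts to a general curve section $C\subset Z$ and invokes a lemma on quadric fibrations over curves), but Step 2 has a genuine gap that is, in fact, the heart of the lemma.

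The problem is the claim that $D\sim_\Q\sum a_i\varphi^*D_i$. Equidimensionality only tells you that every codimension-one component $W_j$ of $\varphi^{-1}(Z\setminus U)$ \emph{dominates} a prime divisor $D_i\subset Z$; it does not say $W_j=\varphi^{-1}(D_i)$, because $\varphi^{-1}(D_i)$ may well be reducible (this is exactly what happens when the quadric fibres degenerate). A $\Q$-divisor supported on such a reducible preimage is a $\Q$-linear combination of pullbacks only if, for each $D_i$, the coefficients on the various components over $D_i$ are proportional to the multiplicities in $\varphi^*D_i$ --- and nothing in your argument forces this. Concretely, if $X\to Z=\PP^1$ is the blow-up of $\PP^1\times\PP^1$ at a point with $\varphi$ the first projection, then $E$ (the exceptional curve) is a $\Q$-Cartier divisor supported over a single point, trivial on general fibres, yet not $\Q$-linearly equivalent to any pullback: the degenerate fibre $E+F'$ is reducible and $E,F'$ are independent in $\mathrm{Pic}(X)\otimes\Q$. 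Adding a principal divisor cannot fix this, since on a fibration any rational function whose divisor lies over $Z\setminus U$ is itself pulled back from $Z$ and just reshuffles the $a_i$. So your descent statement, as formulated, is simply false for an arbitrary line bundle trivial on the generic fibre, and the extra input from the quadric hypothesis has to enter somewhere.

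The paper supplies precisely this missing input. It first proves the descent over a general complete-intersection curve $C\subset Z$ (via \cite[Lemma 2.6]{a23}, which is where the quadric structure is really used), then uses a Zariski-lemma argument \cite[Lemma 8.2]{BHPV04} to upgrade this to $(K_X+kH)|_{X_0}\sim_\Q\varphi_0^*M_0$ over $Z_0=Z\setminus Z'$ with $Z'$ of codimension $\geq 2$. Only \emph{then} does it extend $M_0$ across the codimension-$\geq 2$ set $Z'$ --- at which point the extension really is automatic, because $X\setminus X_0$ has codimension $\geq 2$ in $X$ and no boundary divisors need to be matched. Your proof tries to extend across a codimension-one boundary, which is exactly where the argument cannot be purely formal. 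To repair your approach you would need to show directly that $K_X+kH$ is $\varphi$-numerically trivial on every fibre component over codimension-one points of $Z$, which is essentially equivalent to the curve-section plus Zariski-lemma step.
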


\begin{proof}
Let $C \subset Z$ be a complete intersection of $\dim Z-1$ general hyperplane sections. The preimage
$X_C := \fibre{\varphi}{C}$ is a normal projective variety and the fibration $\varphi_C \colon X_C \rightarrow C$ 
satisfies the conditions of \cite[Lemma 2.6]{a23}. Thus we know that $X_C$
has  canonical singularities, and there exists a Cartier divisor $M_C$ on $C$ such that 
$$
K_{X_C}+k H|_{X_C} \sim_\Q \varphi_C^* M_C.
$$
Using the canonical modification of $X$ we see that there exists 
a closed (maybe empty) subset $Z' \subset Z$ of codimension at least two in $Z$ such that $X_0:= \fibre{\varphi}{Z \setminus Z'}$ has 
canonical singularities, and $(K_{X}- k H)|_{X_0}$ is nef with respect to the equidimensional fibration 
$\varphi_0\colon X_0 \rightarrow Z_0:=Z \setminus Z'$. It follows from Zariski's lemma \cite[Lemma 8.2]{BHPV04} that
$$
(K_{X}+k H)|_{X_0} \sim_\Q \varphi_0^* M_0
$$
where $M_0$ is a Cartier divisor on $Z_0$. Since $Z$ is smooth the divisor $M_0$ extends to a Cartier divisor $M$ on $Z$. Since $X \setminus X_0$ has codimension at least two in $X$, the isomorphism above extends to
$K_X+k H \sim_\Q \varphi^* M$.
\end{proof}

\begin{proof}[Proof of Proposition \ref{propositionfibrespace}]
By hypothesis  the
base of the MRC-fibration $X' \dashrightarrow Z$ has dimension $n-k-1$. Thus Proposition \ref{propositionMRC}
applies, and the general fibre $F$ is given by the cases a)-f) of Proposition \ref{propositionindexk}. 

If we are in case a), let $\Hilb$ be a desingularisation of the unique component of the cycle space $\Chow{X}$
such that the general point
parametrises $\mu(F)$, and let $\holom{q}{\Gamma}{\Hilb}$ be the normalisation of the pull-back of the universal family.
By construction the natural morphism
$p: \Gamma \rightarrow X$ is birational and $p^* H$ is $q$-ample and its restriction to the general fibre is the hyperplane divisor.
By \cite[Prop.4.10]{AD12}, \cite[Prop.3.5]{a18} the fibration $q$ is a projective bundle. If we are in the cases d)-f) 
the MRC-fibration factors generically through an almost holomorphic fibration $X' \dashrightarrow W$ such that
the general fibre is a linear $\PP^{k-1}$, so we use the same argument to construct a $\PP^{k-1}$-bundle
$\Gamma \rightarrow \Hilb$ that dominates $X$. 

If we are in case b) let $\Hilb$ be a desingularisation of the unique component of the cycle space $\Chow{X}$ 
such that the general point
parametrises $\mu(F)$, and let $\holom{q}{\Gamma}{\Hilb}$ be the normalisation of the pull-back of the universal family.
Then $q$ is a quadric fibration by Lemma \ref{lemmaquadricbundle}. 

If we are in case c) and $k=1$ the general fibre of the MRC-fibration is the Veronese surface. We repeat the
construction above to obtain $\Hilb$ and $\Gamma \rightarrow \Hilb$. If $k \geq 2$ the $(k+1)$-dimensional cone
over the Veronese surface is dominated by the projective bundle $\PP(\sO_{\PP^2}(2) \oplus \sO_{\PP^2}^{\oplus k-1})$.
Thus we can repeat the construction of case a) to obtain a $\PP^{k-1}$-bundle $\Gamma \rightarrow \Hilb$ that dominates $X$.
\end{proof}

\section{The base of the MRC-fibration}

Proposition \ref{propositionfibrespace} gives a rather precise description of the Fano variety $X$, but the condition
on the MRC-fibration seems rather restrictive. In this section we will give strong evidence that this
condition is always satisfied when $\Nklt(X)$ is a quadric, in particular we prove
Theorem \ref{theoremkone}.

The following lemma generalises a part of \cite[Thm.2]{Ume81} to arbitrary dimension:

\begin{lemma} \label{lemmanotvanishing}
Let $Y$ be a projective scheme of pure dimension $m \geq 2$ 
such that the dualising sheaf is trivial. Suppose that there exists an irreducible component $Y_1 \subset Y$
having two points $\{p_1, p_2 \}$ which are not contained in any other component of
$Y$ such that $Y_1$ is normal near $\{p_1, p_2 \}$,  and that  $\{p_1, p_2 \}$ are isolated non-klt points. 
Let $\holom{\mu}{Y'}{Y}$ be the birational morphism defined by the canonical modification of $Y_1$
in the points $p_1$ and $p_2$.
If $Y$ is log-canonical in $p_1$ and $p_2$, then
$h^{m-1}(Y', \sO_{Y'}) \geq 1$. 
\end{lemma}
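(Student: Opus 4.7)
The plan is to exhibit a non-trivial contribution to $H^{m-1}(Y', \sO_{Y'})$ coming from the exceptional divisors of $\mu$ over $p_1$ and $p_2$, exploiting the triviality of $\omega_Y$.

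First I will set up the divisorial data on $Y'$. Since $p_i \notin Y_j$ for $j \neq 1$ and $Y_1$ is normal at the $p_i$, the germ of $Y$ at each $p_i$ agrees with that of $Y_1$ and is Gorenstein log-canonical (as $\omega_Y$ is trivial and $Y$ is lc at $p_i$ by hypothesis). All discrepancies at valuations over $p_i$ are therefore integers $\geq -1$, and minimality of the canonical modification forces it to extract exactly the lc places. Setting $E_i := \mu^{-1}(p_i)$ and $E := E_1 + E_2$, I expect $E$ to be a reduced Cartier divisor on the (Gorenstein) scheme $Y'$, with
\[
\omega_{Y'} \;\cong\; \mu^*\omega_Y\,(-E) \;\cong\; \sO_{Y'}(-E).
\]

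Next I will exploit the tautological short exact sequence
\[
0 \to \omega_{Y'} \to \sO_{Y'} \to \sO_E \to 0.
\]
Adjunction gives $\omega_E \cong \omega_{Y'}(E)|_E \cong \sO_E$, so Serre duality on the Cohen--Macaulay projective scheme $E$ yields $h^{m-1}(E,\sO_E) = h^0(E,\sO_E)$. Zariski's connectedness theorem (applied at the normal points $p_i$) ensures each $E_i$ is connected, so $E$ has exactly two connected components and $h^{m-1}(E,\sO_E) = 2$.

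I then pass to the long exact cohomology sequence. Serre duality on $Y'$ identifies $h^m(Y',\omega_{Y'}) = h^0(Y',\sO_{Y'}) = 1$ (using connectedness of $Y$, hence of $Y'$), whereas
\[
h^m(Y',\sO_{Y'}) = h^0(Y',\sO_{Y'}(-E)) = 0,
\]
since the nonzero constants do not vanish on the non-empty divisor $E$. The relevant segment of the long exact sequence becomes
\[
H^{m-1}(Y',\sO_{Y'}) \to H^{m-1}(E,\sO_E) \cong \bC^2 \twoheadrightarrow H^m(Y',\omega_{Y'}) \cong \bC,
\]
so the image of the first arrow has dimension at least that of the kernel of the second, namely at least $1$. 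Thus $h^{m-1}(Y',\sO_{Y'}) \geq 1$.

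The hard step will be the opening setup: verifying rigorously that the Gorenstein log-canonical hypothesis at the $p_i$ forces the canonical modification to extract a reduced Cartier exceptional divisor, and leaves $Y'$ itself Gorenstein, so that the identification $\omega_{Y'} \cong \sO_{Y'}(-E)$ holds as an isomorphism of honest line bundles (not merely a $\bQ$-linear equivalence). Once this is secured, the remainder is standard Serre duality and cohomological bookkeeping.
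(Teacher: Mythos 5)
Your argument takes a genuinely different route from the paper's: you attack $H^{m-1}(Y',\sO_{Y'})$ directly on $Y'$ via the short exact sequence $0 \to \omega_{Y'} \to \sO_{Y'} \to \sO_E \to 0$ and Serre duality, whereas the paper first reduces via Leray to showing $h^0(Y, R^{m-1}\mu_*\sO_{Y'})\geq 2$, then replaces $Y'$ by a \emph{desingularisation} and invokes Kov\'acs' vanishing theorem $R^j\mu_*\sO_{Y'}(-E)=0$ to identify $R^j\mu_*\sO_{Y'}$ with $R^j(\mu|_E)_*\sO_E$ for $j>0$.

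The difficulty you flag at the end is a real gap, and it is precisely the one the paper's detour through a resolution is designed to avoid. Your chain $\omega_{Y'}\cong\sO_{Y'}(-E)$ and hence $\omega_E\cong\sO_E$ rests on two unproved assertions: that $K_{Y'}$ is Cartier (i.e.\ $Y'$ is Gorenstein), and that every $\mu$-exceptional prime divisor has discrepancy exactly $-1$, with none extracted at discrepancy $0$. Neither is automatic. A canonical modification has canonical, hence Cohen--Macaulay and $\Q$-Gorenstein singularities, but canonical singularities are in general \emph{not} Gorenstein; the negativity lemma together with the lc hypothesis only gives integer discrepancies in $\{-1,0\}$, and does not by itself force all of them to be $-1$ nor force the Weil divisor $E$ to be Cartier. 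Once you desingularise to make $K$ Cartier, as the paper does, the relation becomes $K_{Y'}\simeq\mu^*K_Y - E + F$ with $F$ an effective divisor having no component in common with $E$; then $\omega_{Y'}\not\cong\sO_{Y'}(-E)$ in general, $\omega_E\cong\sO_E(F)$ is merely effective rather than trivial, and your short exact sequence no longer has $\omega_{Y'}$ as its kernel, so the Serre-duality bookkeeping breaks. The paper's argument works with this weaker conclusion: effectivity of $\omega_E$ plus the two connected components of $E$ already give $h^0(E,\omega_E)\geq 2$, which feeds into the Leray argument. So to repair your proof you would either need to establish the Gorenstein claim for $Y'$ (which you should not expect to hold in full generality), or rewrite the cohomological part so that it only uses $\omega_E$ effective on a resolution -- at which point you essentially land on the paper's approach.
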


\begin{proof}
The image of the $\mu$-exceptional locus equals $\{p_1, p_2 \}$,
so the higher direct image sheaves $R^j \mu_* \sO_{Y'}$ have support in a finite set.
Our goal is to prove that we have
\begin{equation} \label{notvanishing}
h^0(Y, R^{m-1} \mu_* \sO_{Y'}) \geq 2.
\end{equation}
Assuming this for the time being, let us see how to conclude by a Leray spectral sequence computation: 
since the sheaves $R^j \mu_* \sO_{Y'}$
have no higher cohomology for $j>0$ we see that
$$
H^0(Y, R^{m-1} \mu_* \sO_{Y'}) = E_2^{0,m-1}=E_{m}^{0, m-1}
$$
and
$$
\C \cong H^0(Y, \omega_Y) \cong H^m(Y, \sO_Y) = E_2^{m, 0} = E_m^{m, 0}.
$$
By \eqref{notvanishing} the first space has dimension at least two. Thus the kernel of the map
$$
d_m : E_{m}^{0, m-1} \rightarrow E_m^{m, 0}
$$
has dimension at least one, hence $\dim E_{m+1}^{0, m-1} = \dim E_{\infty}^{0, m-1}>0$.
Since the map $H^{m-1}(Y', \sO_{Y'}) \rightarrow E_{\infty}^{0, m-1}$ is surjective the statement follows.

For the proof of \eqref{notvanishing} note first that the claim is local in a neighbourhood of
the points $\{p_1, p_2 \}$. Thus we can suppose without loss of generality that $Y$ is a normal variety
with trivial canonical divisor such that $\Nklt(Y)=\{p_1, p_2 \}$ and  $\mu\colon Y' \rightarrow Y$ is the canonical 
modification. Since $Y'$ has canonical, hence rational singularities, we can replace it with a desingularisation,
which for simplicity's sake we denote by the same letter. Since $K_Y$ is Cartier we can write
$$
K_{Y'} \simeq \mu^* K_Y - E + F,
$$
where $E$ is a reduced divisor mapping surjectively onto $\Nklt(Y)$ and $F$ is an effective divisor
such that $E$ and $F$ have no common components. In particular $\omega_E \simeq \sO_E(F)$
is effective. By Kov\'acs' vanishing theorem \cite[Cor.6.6]{Kov11} we have
$R^j \mu_* \sO_{Y'}(-E)=0$ for all $j>0$, hence
$$
R^j \mu_* \sO_{Y'} \simeq R^j (\mu|_E)_* \sO_{E}
$$
for all $j>0$. By duality (in the sense of  \cite[III, Sect.7, Defn.,p.241]{Har77}) we have
$$
H^{m-1}(E, \sO_E) \cong H^0(E, \omega_E).
$$
We have seen that $\omega_E$ is effective. Since $E$ has at least two connected components the
inequality \eqref{notvanishing} follows.
\end{proof}

\begin{definition} \label{definitionladder}
Let $X$ be a Fano variety of dimension $n$ and index $k$ with lc singularities, and let $H$ be a Cartier divisor such that 
$-K_X \simeq kH$. We say that $X$ admits a {\rm ladder} if there exist $k$ general divisors $D_1, \ldots, D_k$ in $|H|$ such that
for all $i \in \{1, \ldots, k\}$ the intersection
$$
Z_i := D_1 \cap \ldots \cap D_i
$$
is a normal variety of dimension $n-k$ with lc singularities such that
$$
\Nklt(Z_i) = \Nklt(X) \cap D_1 \cap \ldots \cap D_i.
$$
\end{definition}

\begin{proposition} \label{propositionnotRC}
Let $X$ be a Fano variety of dimension $n$ and index $k$ with lc singularities, 
and let $H$ be a Cartier divisor such that $-K_X \simeq kH$. 
Suppose that $(\Nklt(X), \sO_{\Nklt(X)}(H))$ is a quadric of dimension $k$. If $k>1$ suppose also that $X$ admits a ladder.
Then we have
$$
h^{n-k-1}(X', \sO_{X'}) \neq 0,
$$
where $X' \rightarrow X$ is the canonical modification. Moreover the base of the MRC-fibration
$X' \dashrightarrow Z$ has dimension $n-k-1>0$.
\end{proposition}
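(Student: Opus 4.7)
The plan is to cut $X$ down by the ladder to a log Calabi--Yau variety with only two isolated non-klt points, apply Lemma \ref{lemmanotvanishing} there, and then lift the resulting non-vanishing back to $X'$ via Kawamata--Viehweg. The MRC base is then handled by combining Koll\'ar's theorem with the cone-like geometry along $\Nklt(X)$.

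For the non-vanishing, I take general sections $D_1,\dots,D_k \in |H|$ furnished by the ladder (a single Bertini section suffices when $k=1$), and set $Z := D_1 \cap \cdots \cap D_k$. The ladder makes $Z$ a normal projective variety of dimension $n-k$ with lc singularities and $\Nklt(Z) = \Nklt(X) \cap Z$; iterated adjunction gives $\omega_Z \cong \sO_Z$; and since $(\Nklt(X),\sO_{\Nklt(X)}(H)) \cong (Q^k,\sO_{Q^k}(1))$, the intersection $\Nklt(X) \cap Z$ is a reduced length-two subscheme $\{p_1,p_2\}$. Assuming $n-k \geq 2$, Lemma \ref{lemmanotvanishing} applies to $Y = Y_1 = Z$ and yields
\[
h^{n-k-1}(\widetilde Z, \sO_{\widetilde Z}) \geq 1,
\]
where $\widetilde Z \to Z$ is the canonical modification at $\{p_1,p_2\}$.

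To lift this back to $X'$, consider the iterated scheme-theoretic sections $W_i := \mu^*D_1 \cap \cdots \cap \mu^*D_i$, with $W_0 := X'$. A Bertini-for-canonical argument, together with a careful analysis of $\mu^*D_i$ along exceptional fibres, should show that for suitable generic choices $W_k$ is identified with $\widetilde Z$ (at least in the cohomology range that matters). Each $W_i$ is Cohen--Macaulay of pure dimension $n-i$, so on $W_{i-1}$ Kawamata--Viehweg applied to the nef and big Cartier divisor $\mu^*H$ gives $H^{j+1}(W_{i-1}, \sO_{W_{i-1}}(-\mu^*H))=0$ for $j \leq n-i-1$. The short exact sequence
\[
0 \to \sO_{W_{i-1}}(-\mu^*H) \to \sO_{W_{i-1}} \to \sO_{W_i} \to 0
\]
then gives a surjection $H^{n-k-1}(W_{i-1}, \sO_{W_{i-1}}) \twoheadrightarrow H^{n-k-1}(W_i, \sO_{W_i})$ at each step (since $n-k-1 \leq n-i-1$ for $i \leq k$). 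Chaining these surjections over $i=1,\dots,k$ yields
\[
H^{n-k-1}(X', \sO_{X'}) \twoheadrightarrow H^{n-k-1}(\widetilde Z, \sO_{\widetilde Z}) \neq 0,
\]
proving the non-vanishing.

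For the MRC dimension, a smooth resolution $\widehat X$ of $X'$ shares its structure-sheaf cohomology (canonical singularities are rational), so Hodge symmetry gives $h^0(\widehat X, \Omega^{n-k-1}) \neq 0$. Koll\'ar's birational invariance of $h^0(\Omega^\bullet)$ along the MRC fibration then yields $h^0(\widehat Z_{\mathrm{MRC}}, \Omega^{n-k-1}) \neq 0$ on a smooth model of the MRC base, whence $\dim Z_{\mathrm{MRC}} \geq n-k-1$. For the reverse inequality, through each vertex $v \in \Nklt(X) \cong Q^k$ and a general $x \in X$ the Fano/cone-like geometry of $X$ along $\Nklt(X)$ should supply a rational curve joining $v$ to $x$; varying $v$ yields a $(k+1)$-dimensional rationally chain-connected subvariety through $x$, so general MRC fibres have dimension at least $k+1$, i.e.\ $\dim Z_{\mathrm{MRC}} \leq n-k-1$. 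The main obstacle throughout is the identification of $W_k$ with $\widetilde Z$ and the validation of Kawamata--Viehweg at each stage of the iteration, since $\mu^*D_i$ may acquire multiplicities along $\mu$-exceptional divisors that a priori spoil the needed regularity; a secondary delicate point is the upper bound on $\dim Z_{\mathrm{MRC}}$, where the abstract quadric structure of $\Nklt(X)$ must be converted into an explicit covering family of rational curves.
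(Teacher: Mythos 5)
Your plan is close in spirit to the paper's, but it has two genuine gaps, both of which you flag honestly and both of which the paper resolves by a different organisation of the argument. The central issue is step-iterated slicing: you want to form $W_i := \mu^*D_1 \cap \cdots \cap \mu^*D_i$ inside the canonical modification $X'$ and to have $W_k$ identified with the canonical modification $\widetilde Z$ of $Z = D_1 \cap \cdots \cap D_k$, with each $W_i$ having singularities good enough for Kawamata--Viehweg to apply. Simply asserting that the $W_i$ are Cohen--Macaulay of pure dimension $n-i$ is not enough for the vanishing $H^{j+1}(W_{i-1},\sO_{W_{i-1}}(-\mu^*H))=0$; you need rational (e.g.\ canonical) singularities on each $W_{i-1}$, and you also need that no extra components or multiplicities appear along the exceptional divisors, i.e.\ that $\mu^*D_i$ really equals the strict transform. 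The paper avoids tracking all of this at once by running an induction on the index $k$: it picks a single general $Y\in|H|$, proves that (near $\Nklt(X)$) the strict transform $Y'$ equals the total transform $\mu^*Y$ \emph{and} that $\mu|_{Y'}\colon Y'\to Y$ is the canonical modification of $Y$ at $Y\cap\Nklt(X)$, applies Kawamata--Viehweg once on $X'$ to get $H^{n-k-1}(X',\sO_{X'})\twoheadrightarrow H^{n-k-1}(Y',\sO_{Y'})$, and then invokes the induction hypothesis on $Y$ (which, by the ladder, is again Fano with lc singularities, quadric non-klt locus and a ladder). Each Kawamata--Viehweg application is thus made on a variety that is a canonical modification by construction, so rational singularities are automatic. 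Your ``all at once'' version would be recovered exactly by unrolling this induction, but the identification and singularity checks you defer to ``a careful analysis'' are precisely the content of the induction step; they are not formalities.

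The second gap is the upper bound $\dim Z_{\mathrm{MRC}}\leq n-k-1$. Your proposal to produce rational curves joining a general $x\in X$ to a vertex $v\in\Nklt(X)\cong Q^k$, and to vary $v$ to get a $(k+1)$-dimensional rationally chain-connected family, is much harder to justify than you suggest: a priori $X$ is only rationally chain-connected \emph{modulo} $\Nklt(X)$, and nothing in the hypotheses gives you a covering family of rational curves through a fixed point of the quadric. The paper's route is cleaner: it observes that the strict transform in $X'$ of the Calabi--Yau slice $Z_k$ (or, for $k=1$, of the component $Y_1$ of $Y$ meeting $\Nklt(X)$) has antieffective canonical sheaf because it absorbs the exceptional divisor $E$, hence is uniruled; since this slice moves (by \eqref{surjective} and the ladder), it is not contracted by the MRC fibration, and because the MRC base is never uniruled (Graber--Harris--Starr), the base has dimension strictly less than $\dim Z_k = n-k$, giving $\dim Z_{\mathrm{MRC}}\leq n-k-1$. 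Your lower bound via rational singularities, Hodge symmetry, and the vanishing of holomorphic forms on rationally connected fibres is correct and is what the paper uses too. Finally, note that $n-k\geq 2$ (and hence the applicability of Lemma \ref{lemmanotvanishing} to $Z$) is automatic: the non-klt locus of a normal variety with $K_X$ Cartier has codimension at least two, so $k=\dim\Nklt(X)\leq n-2$.
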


\begin{proof}
Note first that by the Nadel vanishing theorem the restriction map
\begin{equation} \label{surjective}
H^0(X, \sO_X(H)) \rightarrow H^0(\Nklt(X), \sO_{\Nklt(X)}(H))
\end{equation}
is surjective. Since  $\sO_{\Nklt(X)}(H)$ is globally generated, this implies that $|H|$ is globally generated near $\Nklt(X)$.
In particular if $Y \in |H|$ is a general divisor, then we can apply the adjunction 
formula \cite[Lemma 1.7.6]{BS95}  to see that $\omega_{Y} \simeq \sO_{Y}((k-1)H)$. Moreover the intersection
$$
Y \cap \Nklt(X)
$$
is a quadric of dimension $k-1$
and $Y \cap \Nklt(X)$ is a connected component of the non-klt locus of $Y$\footnote{The divisor $Y$ may have
several irreducible components, but by Bertini's theorem there exists a unique irreducible component that intersects
$\Nklt(X)$. Note that this component is normal near $\Nklt(X)$ so it makes sense to speak of the non-klt locus.}. 
Let $\holom{\mu}{X'}{X}$ be the canonical modification, then 
the strict transform $Y'$ of $Y$ coincides with the total transform, hence we have $Y' \simeq \mu^* H$. 
Moreover the birational map $\holom{\mu|_{Y'}}{Y'}{Y}$ is the canonical modification of $Y$
along $Y \cap \Nklt(X)$.

Since $\mu^* H$ is nef and big, we have 
$$
H^j(X', \sO_{X'}(-\mu^* H))=0
$$
for all $j \leq n-1$ by \cite[Thm.2.70]{KM98}. Since $n-k-1 \leq n-2$ we obtain
$$
H^{n-k-1}(X', \sO_{X'}) \twoheadrightarrow H^{n-k-1}(Y', \sO_{Y'}).
$$
We will now conclude by induction: if $k=1$, then $\omega_{Y}$ is trivial and 
$\holom{\mu|_{Y'}}{Y'}{Y}$ satisfies the conditions of Lemma \ref{lemmanotvanishing}.
Thus we have $H^{n-2}(Y', \sO_{Y'}) \neq 0$.
If $k \geq 2$, then by hypothesis the Fano variety $X$ admits a ladder $D_1=Y, D_2, \ldots, D_k$, so $Y$ has lc singularities,
$\Nklt(Y)$ is a quadric and the divisors 
$$
D_i \cap Y \in |H|_{Y}|
$$
define a ladder on $Y$. Thus the induction hypothesis applies to $Y$.

This also implies that the base of the MRC-fibration has dimension at least $n-k-1$.
In order to see that equality holds let us first deal with the case $k=1$. As above let $Y \in |H|$ be a general divisor,
and let $Y_1 \subset Y$ be the unique irreducible component that meets $\Nklt(X)$. Let $Y_1'$ (resp. $Y'$) be
the strict transform under the canonical modification $\mu$. Since $\Nklt(X) \cap Y= \Nklt(X) \cap Y_1$ is not empty
and $\omega_Y \simeq \sO_Y$ we see that $\omega_{Y'}$ is antieffective. Moreover we have a map
$\omega_{Y_1'} \rightarrow \omega_{Y'} \otimes \sO_{Y_1}$ that is an isomorphism in the generic point
of $Y_1'$, so $\omega_{Y_1'}$ is antieffective.
Thus we see that $Y_1'$ is uniruled. Since $Y_1'$ is general (note that by \eqref{surjective} we have $h^0(X, \sO_X(H)) \geq 3$),
it is not contracted by the MRC-fibration $X' \dashrightarrow Z$. 
Since $Z$ is not uniruled \cite{GHS03}, we obtain $\dim Z<\dim Y_1'=n-1$.

If $k>1$ then by hypothesis we can consider  the complete intersection
$Z_{n-k}$ defined in Definition \ref{definitionladder}. Then $K_{Z_{n-k}}$ is trivial
and by what precedes its strict transform $Z_{n-k}' \subset X'$ is uniruled. 
As in the case $k=1$ we obtain $\dim Z<\dim Z_{n-k}=n-k$.
\end{proof}

Theorem \ref{theoremkone} is now an immediate consequence:

\begin{proof}[Proof of Theorem \ref{theoremkone}]
The dimension of the base of the MRC-fibration is a birational invariant for varieties with canonical singularities 
\cite{HMK07}, so by Proposition \ref{propositionnotRC} we have $\dim Z=n-2$. Thus we can conclude
with Proposition \ref{propositionMRC}.
\end{proof}

For Fano varieties with high index we can verify the ladder condition in Proposition \ref{propositionnotRC}:

\begin{proposition} \label{propositionladder}
Let $X$ be a Fano variety of dimension $n$ and index $k \geq n-3$ with  lc singularities,
and let $H$ be a Cartier divisor such that  $-K_X \simeq kH$. Suppose that $\dim \Nklt(X)=k$.
Let $Y \in |H|$ be a general divisor. Then $Y$ is
a normal variety with lc singularities such that
$$
\Nklt(Y) = \Nklt(X) \cap Y.
$$
\end{proposition}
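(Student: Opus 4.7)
The plan is to combine Nadel vanishing with Bertini-type theorems and inversion of adjunction, exploiting the classification of $\Nklt(X)$ provided by Theorem~\ref{theoremnkltlocus}. First I note that since $X$ is normal and $\Nklt(X) \subset \sing(X)$, one has $\codim_X \Nklt(X) \geq 2$; combined with $k \geq n-3$ this forces $n - k \in \{2, 3\}$, so for a general $Y \in |H|$ meeting $\Nklt(X)$ properly the intersection $Y \cap \Nklt(X)$ has codimension at least $2$ in $Y$. Next I apply Nadel vanishing (cf.\ \cite[Thm.3.2]{Fuj11}) to the non-klt ideal $\sI_{\Nklt(X)}$ twisted by the Cartier divisor $H$: since $H - K_X \simeq (k+1)H$ is ample, $H^1(X, \sI_{\Nklt(X)} \otimes \sO_X(H)) = 0$, yielding the surjection
$$
H^0(X, \sO_X(H)) \twoheadrightarrow H^0(\Nklt(X), \sO_{\Nklt(X)}(H)).
$$
By Theorem~\ref{theoremnkltlocus}, $\sO_{\Nklt(X)}(H)$ is very ample, so $|H|$ is base-point-free on an open neighbourhood $U \supset \Nklt(X)$, and for a general $Y \in |H|$ the scheme $Y \cap \Nklt(X)$ is a reduced hyperplane section of $(\Nklt(X), H|_{\Nklt(X)})$ of pure dimension $k-1$ (a $\PP^{k-1}$, an integral $Q^{k-1}$, or a reducible quadric).

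For normality of $Y$, on $U$ the freeness of $|H|$ combined with Bertini gives $Y \cap U$ smooth off $\sing(X)$, so the singular locus of $Y \cap U$ is contained in $Y \cap \sing(X) \cap U$, of codimension $\geq n-k \geq 2$ in $Y$. Outside $\Nklt(X)$ the variety $X$ has klt Gorenstein, hence canonical, singularities; such singularities are Cohen--Macaulay with rational singularities, so a general Cartier hyperplane section $Y$ remains Cohen--Macaulay ($S_2$) there by standard Bertini, and $R_1$ by Bertini on the smooth locus of $X \setminus \Nklt(X)$. Combining, $Y$ satisfies $R_1$ and $S_2$, hence is normal.

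For log-canonicity and the identification of $\Nklt(Y)$, on $U$ the free linear system $|H||_U$ applied to the lc pair $(X|_U, 0)$ yields, by Bertini for lc pairs, that $(X, Y)$ is lc on $U$ for a general $Y$; inversion of adjunction then gives $Y \cap U$ lc with $\Nklt(Y \cap U) = \Nklt(X) \cap Y$, since the non-klt centres of $(X, Y)$ meeting $Y$ pull back from $\Nklt(X) \cap Y$. Outside $U$, $X$ is klt and a generic Cartier hyperplane section preserves kltness, so $Y$ is klt there. The principal obstacle is the behaviour of $|H|$ outside $U$, where it may have base points, necessarily contained in the klt locus of $X$: one must verify that a generic $Y \in |H|$ does not introduce new non-klt singularities at these base points. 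The small codimension $\codim \Nklt(X) \leq 3$ provided by $k \geq n-3$, together with standard Bertini on the klt locus and the openness of the klt condition under divisorial perturbation, is what makes the argument close.
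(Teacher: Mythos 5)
Your proposal correctly identifies the starting ingredients (Nadel vanishing for the surjection, global generation of $\sO_{\Nklt(X)}(H)$, inversion of adjunction near $\Nklt(X)$), and you correctly pinpoint the \emph{principal obstacle}: outside the open set $U$ where $|H|$ is free, the general $Y\in|H|$ might acquire non-klt singularities along the base locus. But the paragraph you offer to close this obstacle is not an argument — it is a restatement of the difficulty. Neither ``small codimension of $\Nklt(X)$'' (which is irrelevant, since $\Bs|H|$ is disjoint from $\Nklt(X)$), nor ``standard Bertini on the klt locus'' (which fails precisely because $|H|$ is not free there), nor ``openness of the klt condition under divisorial perturbation'' (which concerns coefficients, not the geometry of the base scheme) resolves the issue. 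The index condition $k\geq n-3$ never actually enters your argument, which should be a warning sign: a purely Bertini-based proof would work for any index, but the proposition is false in that generality.

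What is actually needed — and is the heart of the paper's proof — is to show that the pair $(X\setminus\Nklt(X),\,Y\setminus\Nklt(X))$ is plt, and this is where $k\geq n-3$ is used in an essential, non-obvious way. The argument (following Floris) runs: if the pair fails to be plt, there is $0<c\leq 1$ and a minimal lc centre $W\subset\Bs|H|$ of the pair $(X\setminus\Nklt(X),\,c\,Y)$. Vanishing for lc pairs (Fujino) gives surjectivity of $H^0(X,\sO_X(H))\to H^0(W,\sO_W(H))$, hence $H^0(W,\sO_W(H))=0$ because $W$ lies in the base locus. On the other hand, Kawamata subadjunction makes $(W,\Delta_W)$ a klt log Fano pair with $-(K_W+\Delta_W)\sim_\Q (n-3-c)H|_W$, and Kawamata's effective non-vanishing results (for small $\dim W$, or for high index) force $h^0(W,\sO_W(H))\neq 0$; the residual case $\dim W=n-1$ is killed by Floris's bound $c<1/2$ for minimal centres. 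Each branch produces a contradiction. Without this chain of ideas — minimal lc centres in $\Bs|H|$, subadjunction, non-vanishing — your proof has no way to rule out that $Y$ acquires a bad singularity where $|H|$ fails to be free, so the gap is real and not patchable by Bertini-type reasoning alone.
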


\begin{proof}
By the Nadel vanishing theorem the restriction map
$$
H^0(X, \sO_X(H)) \rightarrow H^0(\Nklt(X), \sO_{\Nklt(X)}(H))
$$
is surjective. By Theorem \ref{theoremnkltlocus} we know that $\sO_{\Nklt(X)}(H)$ is globally generated, so
we have
\begin{equation} \label{intersectionempty}
\Bs |H| \cap \Nklt(X) = \emptyset.
\end{equation}
We claim that the pair $(X \setminus \Nklt(X), Y \setminus \Nklt(X))$ is plt. 
Assuming this for the time being, let us see how to conclude. 
Since the pair $(X \setminus \Nklt(X), Y \setminus \Nklt(X))$ is plt we know by 
inversion of adjunction \cite[Thm.7.5.1]{Kol95} that $Y \setminus \Nklt(X)$ has  canonical singularities.
Since $H|_{X \setminus \Bs |H|}$ is a free linear system, a general $Y \in |H|$ is normal by \cite[Thm.1.7.1]{BS95}
and $(X \setminus \Bs |H|, Y \setminus \Bs |H|)$ is an lc pair.
Using the adjunction formula
\cite[Lemma 1.7.6]{BS95} we see that for a general $Y \in |H|$ we have
$-K_Y \simeq (k-1) H|_Y$.
By inversion of adjunction $Y$ has  lc singularities \cite[Thm. p.130]{Kaw07} and
 $$
\Nklt(Y) = \Nklt(X) \cap Y. 
 $$

{\em Proof of the claim.} We will deal with the case $k=n-3$, the other cases being simpler. We follow the argument of \cite[Thm.1.1]{Flo13}: note that $X \setminus \Nklt(X)$ has  canonical singularities.
We argue by contradiction and suppose that the pair $(X \setminus \Nklt(X), Y \setminus \Nklt(X))$ is not plt.
Then there exists a $0 < c \leq 1$
and an irreducible variety $W \subset \Bs |H|$ such that the pair $(X \setminus \Nklt(X), c(Y \setminus \Nklt(X)))$ 
is properly lc and $W$ a minimal lc centre. By \cite[Thm.2.2]{Fuj11} 
the restriction map
$$
H^0(X, \sO_X(H)) \rightarrow H^0(W, \sO_W(H))
$$
is surjective. 
Since $W$ is contained in the base locus this implies that $H^0(W, \sO_W(H))=0$.
By Kawamata subadjunction \eqref{subadjunction} 
there exists an effective divisor $\Delta_W$ such that $(W, \Delta_W)$ is klt and 
$$
(K_W+\Delta_W) \sim_\Q (K_X+c Y)|_W \sim_\Q -(n-3-c) H|_W.
$$
If $\dim W \leq 2$ we apply \cite[Prop.4.1]{Kaw00} to see that $h^0(W, \sO_W(H)) \neq 0$, a contradiction.
If $\dim W \geq 3$ and $n-3-c>\dim W-3$,
then $(W,B_W)$ is log Fano of index at least $n-3-c$.
By \cite[Theorem 5.1]{Kaw00} this implies $h^0(W,\mathcal{O}_W (H))\neq 0$, a contradiction.
Thus we are left with the case when $\dim W \geq 3$ and 
$\dim W\geq n-c$. Since $c \leq 1$ this implies $\dim W = n-1$. 
Since the centre $W$ is minimal we know by \cite[Lemma 2.7]{Flo13}
that $c<1/2$. Thus we have $\dim W \geq n-1/2$, a contradiction.
\end{proof}

\section{Examples} \label{sectionexamples}

In \cite[$\S 2$]{BS87} the first-named author and Sommese observed that for a Fano threefold
of index one with $\dim \Nklt(X)=1$, the non-klt locus consists of rational curves. They also ask
how far $X$ can deviate from being a generalised cone with $\Nklt(X)$ as its vertex. In this section
we answer this question: all the classification results of Section \ref{sectiondescriptionX} are effective,
so in many cases $X$ is far from being a generalised cone.

\begin{example} \label{exampledimkminusone}
Let $Y$ be a projective manifold with trivial canonical divisor, and let $L$ be a very ample line bundle on $Y$. Set
$$
X' := \PP( \sO_Y^{\oplus k} \oplus L ),
$$
then we have
$$
K_{X'} = -k \zeta - (\zeta - \varphi^* L),
$$
where $\zeta$ is the tautological divisor on $X'$. We have $\zeta - \varphi^* L=E$ where $E \cong Y \times \PP^{k-1}$ is the 
divisor defined by the quotient
$$
\sO_Y^{\oplus k} \oplus L \rightarrow \sO_Y^{\oplus k}. 
$$
The divisor $\zeta$ is globally generated and defines a birational morphism $\holom{\mu}{X'}{X}$ that contracts $E$ onto 
a $\PP^{k-1}$. Using the canonical bundle formula above we see that $X$ is a Fano variety of index $k$ with lc singularities,
moreover the non-klt locus is a linear $\PP^{k-1}$.
\end{example}

\begin{example}  \label{exampledimk} 
Let $Y$ be a projective manifold admitting a fibration $\holom{\psi}{Y}{\PP^1}$
such that $\sO_Y(-K_Y) \simeq \psi^* \sO_{\PP^1}(1)$. Then set
$X':= \PP(A \oplus \psi^* \sO_{\PP^1}(1) \oplus \sO_Y^{\oplus k-1})$ where $A$ is a very ample line bundle on $Y$.
Let $\zeta$ be the tautological divisor on $X'$, then $\zeta$ is globally generated and defines
a birational morphism \holom{\mu}{X'}{X} contracting the divisor $E$ corresponding to the quotient
$$
A \oplus \psi^* \sO_{\PP^1}(1) \oplus \sO_Y^{\oplus k-1} \rightarrow \psi^* \sO_{\PP^1}(1) \oplus \sO_Y^{\oplus k-1}
\simeq \psi^* (\sO_{\PP^1}(1) \oplus \sO_{\PP^1}^{\oplus k-1})
$$
onto a $\PP^k$. Using the canonical bundle formula one easily sees that $X$ is Fano of index
$k$ with lc singularities and $(\Nklt(X), \sO_{\Nklt(X)}(H)) \cong (\PP^k, \sO_{\PP^k}(1))$,  where $-K_X\simeq kH$.
\end{example}

\begin{example} \label{exampledimkquadric}
 Let $X$ be a   Fano variety of index $m$ constructed as in  Example \ref{exampledimkminusone}, and let $-K_X\simeq mH$
 with $H$ a Cartier divisor on $X$.
Set $k=m-1$.
By the Nadel vanishing
theorem the restriction map
$$
H^0(X, \sO_X(2H)) \rightarrow H^0(\Nklt(X), \sO_{\Nklt(X)}(2H))
$$
is surjective. Fix now a quadric $Q \subset \PP^k \cong \Nklt(X)$ and fix a general divisor
$B$ in the linear system $|2 H|$ such that $B \cap \Nklt(X) = Q$.
Denote by
$$
\holom{\pi}{\widetilde X}{X}
$$
the cyclic covering of degree two branched along $B$\footnote{Note that by
the generality assumption the divisor $B$ is reduced even if $Q$ is not reduced. In particular
$\widetilde X$ is irreducible.}. By the ramification formula we see that
$-K_{\widetilde X} = \pi^* k H$,
so $\widetilde X$ is a Fano variety of index $k$. 
Using again the ramification formula we know by \cite[Prop.5.20]{KM98} that $\widetilde X$ has lc singularities
if and only if the pair $(X, \frac{1}{2} B)$ is lc. Since $B$ is general this is clear in the complement
of $\Nklt(X)$, moreover by inversion of adjunction \cite[Thm.0.1]{Hac12} the pair  $(X, \frac{1}{2} B)$
is lc near $\Nklt(X)$ if and only if $(\PP^k, \frac{1}{2} Q)$ is lc. Since $Q$ is a quadric this is easily checked.
The restriction of $\pi$ to $\Nklt(\widetilde{X}) \cong \PP^k$ induces a two-to-one cover
$$
\Nklt(\widetilde X) \rightarrow \Nklt(X)
$$
that ramifies exactly along $Q = B \cap \Nklt(X)$. Since $Q$ is a quadric in $\PP^k$, we see that 
$\Nklt(\widetilde X)$ is a quadric, the singularities depending on the singularities of $Q$.
Let $X' \rightarrow Y$ be the projective bundle dominating $X$ (cf. Example \ref{exampledimkminusone}).
Then $X' \times_X \widetilde X \rightarrow Y$ is a quadric bundle dominating $\widetilde X$.
\end{example}

\begin{example} \label{exampleveronese}
Let $Y$ be a projective manifold with trivial canonical divisor. 
Let $L$ be a very ample line bundle on $Y$ and set
$$
B' := \PP(L \oplus \sO_Y^{\oplus 2}).
$$
Denote by $\eta$ the tautological divisor on $B'$, then
$2 \eta$ is globally generated and defines a birational morphism $\holom{\nu}{B'}{B}$
onto a normal projective variety contracting the divisor $D$ corresponding to the quotient
$$
L \oplus \sO_Y^{\oplus 2} \rightarrow \sO_Y^{\oplus 2}
$$
onto a $\PP^1$. Using the canonical bundle formula for $B'$ we see that
$$
K_{B'} \simeq -2 \eta -D \simeq \nu^* K_B - D,
$$
so $B$ has lc singularities and $(\Nklt(B), \sO_{\Nklt(B)}(-K_B)) \cong (\PP^1, \sO_{\PP^1}(2))$.
For some integer $k \geq 2$ we set 
$$
X' := \PP(\sO_{B'}(2 \eta) \oplus \sO_{B'}^{\oplus k-1}), 
$$
and denote by $\zeta$ the tautological divisor on $\holom{\varphi}{X'}{B'}$. The divisor $\zeta$ is globally generated and defines a birational morphism $\holom{\mu}{X'}{X}$. 
If we restrict $\mu$ to  $\fibre{\varphi}{\PP^2}$ where $\PP^2$ is a fibre of $B' \rightarrow Y$,
the image is a generalised cone of dimension $k+1$ over the Veronese surface $(\PP^2, \sO_{\PP^2}(2))$.
If we restrict $\mu$ furthermore to $\fibre{\varphi}{\PP^2 \cap D}$, the image is the generalised cone
of dimension $k$ over the line $\PP^2 \cap D$, but polarised by $\sO_{\PP^2}(2)$, so we
get a quadric $Q$ of dimension $k$ such that the singular locus has dimension $k-2$.
Since
$D \simeq \PP(\sO_Y^{\oplus 2})$,
a straightforward computation shows that 
$$
\zeta^{k+1} \cdot \varphi^* D = 0 \ \mbox{in} \ H^{2k+4}(X', \R),
$$
so the divisor
$E:=\varphi^* D$ is contracted by $\mu$ onto the $k$-dimensional quadric $Q$. Using the canonical bundle
formula we see that
$$
K_{X'} = -k \zeta - E = \mu^* K_X - E,
$$
so $X$ is a Fano variety of index $k$ having lc singularities and $\Nklt(X) \cong Q$. 
Note also that $\mu$
factors through a birational morphism $\holom{p}{\Gamma}{X}$ where $\Gamma$ is a normal
projective variety admitting a locally trivial fibration $\holom{q}{\Gamma}{Z}$
such that the polarised fibre is the generalised cone of dimension $k+1$ over the Veronese surface $(\PP^2, \sO_{\PP^2}(2))$.
\end{example}

\begin{example} \label{examplereduciblequadric}
For some $k \geq 3$, let $Y \subset \PP^{k-1}$ be a smooth hypersurface of degree $k$,
so $Y$ is a Calabi--Yau manifold.  We set
$$
B' := \PP(\sO_Y(1) \oplus \sO_Y),
$$
and denote by $\eta$ the tautological divisor on $B'$. Then $\eta$ is globally generated
and defines a birational morphism \holom{\nu}{B'}{B} where $B$ is the cone over $Y$.
We have $K_{B'} \simeq \nu^* K_{B} - D$, where $D$ is the divisor corresponding to the quotient
$$
\sO_Y(1) \oplus \sO_Y \rightarrow \sO_Y.
$$
Note that $B$ is a hypersurface of degree $k$ in $\PP^{k}$, so $B$ is Cohen--Macaulay
and has exactly one non-klt point, the vertex of the cone. The anticanonical sheaf 
$\sO_B(-K_{B}) \simeq \sO_{B}(1)$ is globally generated and $h^0(B, \sO_B(-K_{B}))=k+1$.
We define $W^*$ to be the vector bundle of rank $k$ that is the kernel of
the evaluation  map
$\sO_{B}^{\oplus k+1} \twoheadrightarrow \sO_B(-K_{B})$. Dualizing we obtain an exact sequence
$$
0 \rightarrow \sO_B(K_{B}) \rightarrow \sO_{B}^{\oplus k+1} \rightarrow W \rightarrow 0,
$$
so $W$ is globally generated. Moreover by a singular version of Kodaira vanishing \cite[Cor.6.6]{KSS10}
we have $H^1(B, \sO_B(K_{B}))=0$, so we get $h^0(B, W)=k+1$.

Set now $W':=\nu^* W$ and let $A$ be the pull-back of a very ample Cartier divisor on $B$.
We set
$$
X' := \PP(\sO_{B'}(A) \oplus W'),
$$
and denote by $\zeta$ the tautological divisor on $\holom{\varphi}{X'}{B'}$. The divisor $\zeta$ is globally generated and defines a birational morphism $\holom{\mu}{X'}{X}$. 
Denote by $E \subset X'$ the divisor defined by the quotient
$$
\sO_{B'}(A) \oplus W' \rightarrow W'.
$$
Then by the canonical bundle formula
$$
K_{X'} \simeq \varphi^*(K_{B'}+A+\det W')-(k+1) \zeta = -k \zeta - (\zeta-\varphi^* A) + \varphi^* (K_{B'}+\det W').
$$
By construction we have $\zeta-\varphi^* A \simeq E$ and $K_{B'}+\det W' \simeq - D$, so we get
$$
K_{X'} \simeq -k \zeta - E - \varphi^* D.
$$
Since $D$ is contracted by $\nu$, the restriction of $\sO_{B'}(A) \oplus W'$ to $D$ is isomorphic
to $\sO_D^{\oplus k+1}$, so $\varphi^* D$ is contracted by $\mu$ onto a $\PP^k$.
The divisor $E$ is isomorphic to $\PP(W')$ and by construction $h^0(B', W')=k+1$,
so $E$ is also contracted onto a $\PP^k$. Thus $X$ is a Fano variety of index $k$
with lc singularities such that $\Nklt(X)$ is a reducible quadric.
\end{example}

\begin{example} \label{exampleprojectivebundles}
Let $Y$ be a projective manifold with trivial canonical divisor and set $B := Y \times \PP^1$.
Let $A$ be a very ample Cartier divisor on $B$ and set
$$
X' := \PP(\sO_B(A) \oplus \sO_B(-K_B) \oplus \sO_B^{\oplus k-1}).
$$
Denote by
$\zeta$ the tautological divisor on $\holom{\varphi}{X'}{B}$. The divisor $\zeta$ is globally generated and defines a birational morphism $\holom{\mu}{X'}{X}$. 
Moreover if $E \subset X'$ is the divisor defined by the quotient
$$
\sO_B(A) \oplus \sO_B(-K_B) \oplus \sO_B^{\oplus k-1} \rightarrow \sO_B(-K_B) \oplus \sO_B^{\oplus k-1},
$$
then $E$ is contracted by $\mu$ onto a quadric $Q$ of dimension $k$ that is singular along a subvariety
of dimension $k-2$. By the canonical bundle formula we see that
$$
K_{X'} = -k \zeta - E = \mu^* K_X - E,
$$
so $X$ is a Fano variety of index $k$ having lc singularities and $\Nklt(X) \cong Q$. 

Analogously, if we set
$$
X' := \PP\big(\sO_B(A) \oplus \sO_B(- \frac{1}{2}K_B)^{\oplus 2} \oplus \sO_B^{\oplus k-2}\big),
$$
the same properties hold; in this case the quadric is singular along a subvariety of dimension $k-3$.
\end{example}

\def\cprime{$'$}

\end{document}